\newtheorem{lemma}{Lemma}[section]
\newtheorem{theorem}{Theorem}[section]
\newtheorem{remark}{Remark}[section]
\newtheorem{proposition}{Proposition}[section]
\newtheorem{assumption}{Assumption}[section]
\newcommand{\upcite}[1]{\textsuperscript{\textsuperscript{\cite{#1}}}}
\begin{document}

\title{Distributed proximal gradient algorithm for non-smooth non-convex optimization over time-varying networks
}

\author{Xia Jiang,
	Xianlin~Zeng,~\IEEEmembership{Member,~IEEE,}
	Jian~Sun,~\IEEEmembership{Member,~IEEE,}
	and~Jie~Chen,~\IEEEmembership{Fellow,~IEEE,}
	\thanks{This work was supported in part by the National Natural Science Foundation of China under Grants 61925303, 62088101, 62073035 and the National Key Research and Development Program of China under Grant 2018YFB1700100. \emph{(Corresponding author: Jian Sun.)}}
	\thanks{X. Jiang (jiang-xia@bit.edu.cn) and J. Sun (sunjian@bit.edu.cn) are with Key Laboratory of Intelligent Control and Decision of Complex Systems, School of Automation, Beijing Institute of Technology, Beijing, 100081, China, and also with the Beijing Institute of Technology Chongqing Innovation Center, Chongqing  401120, China}
	\thanks{X. Zeng (xianlin.zeng@bit.edu.cn) is with Key Laboratory of Intelligent Control and Decision of Complex Systems, School of Automation, Beijing Institute of Technology, Beijing, 100081, China}
	\thanks{J. Chen (chenjie@bit.edu.cn) is with Beijing Advanced Innovation Center for Intelligent Robots and Systems (Beijing Institute of Technology), Key Laboratory of Biomimetic Robots and Systems (Beijing Institute of Technology), Ministry of Education, Beijing, 100081, China, and also with the School of Electronic and Information Engineering, Tongji University, Shanghai, 200082, China}}

\maketitle

\begin{abstract}
This note studies the distributed non-convex optimization problem with non-smooth regularization, which has wide applications in decentralized learning, estimation and control. The objective function is the sum of different local objective functions, which consist of differentiable (possibly non-convex) cost functions and non-smooth convex functions. This paper presents a distributed proximal gradient algorithm for the non-smooth non-convex optimization problem over time-varying multi-agent networks. Each agent updates local variable estimate by the multi-step consensus operator and the proximal operator. We prove that the generated local variables achieve consensus and converge to the set of critical points with convergence rate $O(1/T)$. Finally, we verify the efficacy of proposed algorithm by numerical simulations. 
\end{abstract}

\begin{IEEEkeywords}
distributed proximal gradient algorithm, non-smooth non-convex optimization, time-varying communication
\end{IEEEkeywords}

\section{Introduction}
\par Motivated by many  problems in signal processing and machine learning over  networks, distributed non-smooth non-convex optimization has attracted significant attention.  In this problem setup, each node in the network only knows local function information and communicates with its neighbors to solve the global optimization problem. One fundamental model for distributed non-smooth non-convex optimization, arising  from optimization problems such as Lasso\upcite{lasso_nonsmooth}, SVM\upcite{svm_nonsmooth}, and optimizing neural networks\upcite{infor_nonsmooth}, is that each local objective function of a node is the summation of a (non-convex) differentiable function and a non-smooth convex function ($l_1$ norm or indicator function). Although the research on distributed optimization has made significant progress on non-smooth convex problems\upcite{nonsmooth_tac,nonsmooth_convex_ijrnc,disnonsmooth_tac,svm_dis,zeng_tac}, distributed non-smooth non-convex optimization is still challenging.
\par Researchers have made great achievements in centralized and parallel algorithms for non-smooth non-convex optimization problems\upcite{nonsmooth_nonconvex_overview,cen-PALM,Yin_2017,GuWHH18,inexact_lei,sw,JMLRPGA}. For instance, \cite{cen-PALM} developed a proximal alternating linearized minimization algorithm with global convergence under Kurdyka-Lojasiewicz property. \cite{Yin_2017} extended the two blocks of objective function in \cite{cen-PALM} to multiple blocks and introduced extrapolation to accelerate the block prox-linear method. When the proximal operator does not have an analytic solution or exactly solving the proximal operator is time-consuming, \cite{GuWHH18,inexact_lei} studied some inexact proximal gradient algorithms for non-convex optimization. With the explosion of data and the development of distributed network systems, \cite{sw,JMLRPGA} developed some asynchronous parallel methods with considerations of unreliable communication links. However, with privacy or security considerations, it is necessary to design fully distributed algorithms for large-scale non-smooth non-convex optimization.
\par In recent years, some distributed discrete-time algorithms\upcite{decen-nonconvex,hong_nonconv,scutari_timevarying,NEXT2016,constrianed_prox} have been proposed for non-smooth non-convex optimization over multi-agent networks. Over time-invariant graphs, \cite{decen-nonconvex,hong_nonconv} proposed distributed proximal gradient algorithms for (non-smooth) non-convex optimization with convergence to consensus stationary solutions. However, time-invariant graphs are difficult and expensive to hold for practical multi-agent networks. Over time-varying networks, \cite{NEXT2016} developed a distributed discrete-time algorithm with successive convex approximation and dynamic consensus mechanism. If agents only have noisy observations of local functions, \cite{constrianed_prox} proposed a distributed stochastic approximation algorithm over time-varing graphs without requiring objective functions be convex and Lipschitz continuous. However, the diminishing step-sizes in existing algorithms hinder the convergence performance. This paper studies a distributed algorithm with a constant step-size for non-smooth non-convex optimization over time-varying communication graphs.
\par The contributions of this paper are summarized as follows.
\begin{itemize}
	\item The paper proposes one distributed proximal gradient algorithm for non-smooth non-convex optimization over time-varying multi-agent networks. The proposed algorithm adopts the multi-step consensus stage to make local variable estimates closer to each other and extends the recent distributed proximal algorithm \cite{decen-nonconvex} over time-invariant graphs to time-varying network graphs. What's more, the proposed algorithm owns a constant  step-size, overcoming the shortage of diminishing step-sizes that hinder the convergence performance\cite{NEXT2016,constrianed_prox}.
	\item We provide complete and rigorous convergence proofs for the proposed distributed proximal gradient algorithm. The proposed algorithm over time-varying graphs has a same convergence rate $O(\frac{1}{T})$ as the algorithm over time-invariant graphs \cite{decen-nonconvex}. To the best of our knowledge, for non-smooth non-convex optimization problems over time-varying graphs, this is the first convergence result showing the rate of convergence of distributed algorithms without using successive convex approximation. 
\end{itemize}
\par The remainder of the paper is organized as follows. 
The preliminary mathematical notations, graph theory and proximal operator are introduced in section \ref{math_sec}. The optimization problem description and the design of a distributed solver are provided in section \ref{solver_design}. The convergence performance of the proposed algorithm is proved theoretically in section \ref{proof_sec}. The numerical simulations are provided in section \ref{simulation} and the conclusion is made in section \ref{conclusion}.

\section{ Preliminaries } \label{math_sec}
\subsection{Mathematical notations \& graph theory}
\par We write $\mathbb{R}$ as the set of real numbers, $\mathbb{N}$ as the set of natural numbers, $\mathbb{R}^n$ as the set of $n$-dimensional real column vectors and $\mathbb{R}^{n\times m}$ as the set of $n$-by-$m$ real matrices, respectively. We denote $v'$ as the transpose of a vector $v$. In addition, $\left\|\cdot \right\|$ denotes the Euclidean norm, $\langle\cdot,\cdot\rangle$ denotes the inner product, which is defined by $\langle a,b\rangle=a'b$ and $\lceil a \rceil$ deontes the smallest integer greater than real number $a$. The vectors in this paper are column vectors unless otherwise stated. For a differentiable function $g:\mathbb{R}^n \to \mathbb{R}$, $\nabla g(x)$ denotes the gradient of function $g$ with respect to $x$. The $\varepsilon$-subdifferential of a convex function $h$ at $x$ is the set of vectors $y$ such that $h(z)-h(x)\geq y^{T}(z-x)-\varepsilon$ for all $z$.
\par The dynamic communication among $m$ agents over time-varying undirected topology is often modeled as $\mathcal{G}(\mathcal{V},\mathcal{E}(t),{A}(t))$, where $\mathcal{V}=\{1,\dots,m\}$ is a finite nonempty node set with $i$ representing $i$th node, $\mathcal{E}(t)\subset \mathcal{V} \times \mathcal{V}$ is the time-varying edge set. The adjacent matrix is denoted by ${A}(t)=[a_{ij}(t)]$ $\in$ $\mathbb{R}^{m \times m}$ such that $a_{ij}(t)=a_{ji}(t) > 0$ if $\{i,j\} \in \mathcal{E}(t)$ and the elements $a_{ij}(t)=0$ otherwise. Note that adjacent matrices of undirected graphs are symmetric matrices. If an edge $\{i,j\} \in \mathcal{E}(t)$, then node $j$ is called a neighbor of node $i$.
\subsection{Proximal Operator}
\par For a proper non-differentiable convex function $h:\mathbb{R}^n\to (-\infty,\infty]$ and a scalar $\alpha>0$, the proximal operator is defined as
\begin{align}\label{prox_ope}
{\rm prox}_{\alpha,h}(x)={\rm argmin}_{z\in \mathbb{R}^n}{h(z)+\frac{1}{2\alpha}\|z-x\|^2}.
\end{align} 
\par The minimum is attained at a unique point $y= {\rm prox}_{\alpha,h}(x)$, which means the proximal operator is a single-valued map. In addition, it follows from the optimality condition for convex optimization problems that
\begin{align}
0\in \partial h(y)+\frac{1}{\alpha}(y-x),
\end{align} 
where the set $\partial h(y)$ is the subdifferential of non-differentiable function $h$ at $y$. The following proposition presents some properties of the proximal operator.
\begin{proposition}\label{prox_proposition}\cite{prox_pro}
	Let $h:\mathbb{R}^n\to (-\infty,\infty]$ be a closed proper convex function. For a scalar $\alpha>0$ and $x\in \mathbb{R}^n$, let $y={\rm prox}_{\alpha,h}(x)$.
	\begin{itemize}
		\item[(a)] The relationship $h(u)\geq h(y)+\frac{1}{\alpha}\langle x-y,u-y\rangle$ holds for all $u\in \mathbb{R}^n$.
		\item[(b)] For $x,\hat{x}\in \mathbb{R}^n$, 
		$$\|{\rm prox}_{\alpha,h}(x)-{\rm prox}_{\alpha,h}(\hat{x})\|\leq \|x-\hat{x}\|.$$
		\item[(c)] The vector $y$ can be written as $y=x-\alpha z$, where $z\in\partial h(y)$.
		\item[(d)] We have $\frac{1}{\alpha}(x-y)\in \partial h(y)$.
	\end{itemize}
\end{proposition}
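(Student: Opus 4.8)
The plan is to derive all four items from a single source, namely the first-order optimality condition already recorded in the text: since $y=\mathrm{prox}_{\alpha,h}(x)$ is the (unique, by strong convexity) minimizer of $\phi(z)=h(z)+\tfrac{1}{2\alpha}\|z-x\|^2$, and the quadratic term is finite and differentiable everywhere, the Moreau--Rockafellar sum rule gives $\partial\phi(y)=\partial h(y)+\tfrac{1}{\alpha}(y-x)$, so that $0\in\partial\phi(y)$ is equivalent to $\tfrac{1}{\alpha}(x-y)\in\partial h(y)$. This is exactly item (d); I would simply point out that it is a restatement of the optimality condition $0\in\partial h(y)+\tfrac{1}{\alpha}(y-x)$ displayed before the proposition, and take it as the hub for the rest.

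Item (c) is then pure algebra: put $z:=\tfrac{1}{\alpha}(x-y)$, which lies in $\partial h(y)$ by (d); then $\alpha z=x-y$, i.e. $y=x-\alpha z$ with $z\in\partial h(y)$. For item (a), I would apply the subgradient inequality for the convex function $h$ at $y$, using the particular subgradient $\tfrac{1}{\alpha}(x-y)$ furnished by (d): for every $u\in\mathbb{R}^n$,
\[
h(u)\ \geq\ h(y)+\Big\langle \tfrac{1}{\alpha}(x-y),\,u-y\Big\rangle\ =\ h(y)+\tfrac{1}{\alpha}\langle x-y,\,u-y\rangle .
\]
(An alternative route to (a) is to use $\tfrac{1}{\alpha}$-strong convexity of $\phi$ to get $\phi(u)\geq\phi(y)+\tfrac{1}{2\alpha}\|u-y\|^2$ and then expand the squares, which actually yields the slightly stronger bound with an extra $\tfrac{1}{2\alpha}\|u-y\|^2$ on the right; I would keep the shorter subgradient argument.)

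For item (b), let $y=\mathrm{prox}_{\alpha,h}(x)$ and $\hat y=\mathrm{prox}_{\alpha,h}(\hat x)$, and apply (d) at both points: $\tfrac{1}{\alpha}(x-y)\in\partial h(y)$ and $\tfrac{1}{\alpha}(\hat x-\hat y)\in\partial h(\hat y)$. Monotonicity of the subdifferential of the convex function $h$ gives $\big\langle \tfrac{1}{\alpha}(x-y)-\tfrac{1}{\alpha}(\hat x-\hat y),\,y-\hat y\big\rangle\geq 0$; multiplying by $\alpha>0$ and rearranging yields $\langle x-\hat x,\,y-\hat y\rangle\geq\|y-\hat y\|^2$, whence by Cauchy--Schwarz $\|x-\hat x\|\,\|y-\hat y\|\geq\|y-\hat y\|^2$, and dividing by $\|y-\hat y\|$ (the case $y=\hat y$ being trivial) gives the claimed non-expansiveness. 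I do not expect a genuine obstacle here, as these are standard facts; the only point requiring care is the foundational step --- justifying the subdifferential sum rule and hence the exact form of the optimality condition, together with uniqueness of the minimizer from strong convexity --- all of which rest on $h$ being proper, closed and convex. Once (d) is secured, (a), (b) and (c) follow quickly from the subgradient inequality, monotonicity plus Cauchy--Schwarz, and elementary algebra, respectively.
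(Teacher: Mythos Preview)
Your argument is correct and is the standard derivation of these facts from the first-order optimality condition for the proximal subproblem. However, there is nothing to compare against: the paper does not supply its own proof of this proposition. It is stated with a citation (to \cite{prox_pro}) as a known result and is used as a black box in the subsequent analysis. Your write-up would serve perfectly well as a self-contained proof were one desired.
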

\par When there exist errors in the computation of proximal operators, we denote the inexact proximal operator by ${\rm prox}_{\alpha,h}^{\varepsilon}(\cdot)$. Let $x_k$ denote the variable at iteration $k$ and $\varepsilon_k$ denote the error in the proximal objective function. Then, the inexact proximal operator at iteration $k$ is
\begin{align}\label{inprox_def}
x_k\in {\rm prox}_{\alpha,h}^{\varepsilon_k}(y)\triangleq \Big\{&\tilde{x}\big|\frac{1}{2\alpha}\|\tilde{x}-y\|^2+h(\tilde{x})\leq \varepsilon_k+\min_{x\in \mathbb{R}^n}\big\{\frac{1}{2\alpha}\|x-y\|^2+h(x)\big\}\Big\}.
\end{align}
\section{Problem Description and Distributed Solver}\label{solver_design}
Consider a multi-agent system composed of $m$ agents, which are interconnected by a time-varying communication network. We aim to design a distributed algorithm for the multi-agent system to solve the following optimization problem
\begin{align}\label{opti_pro}
	{\rm min}_{x} \ &f(x)=\frac{1}{m}\sum_{i=1}^m\big(g_i(x)+h(x)\big),
\end{align}
where $x\in \mathbb{R}^n$ is the decision variable, function $g_i$ is a differentiable (possibly non-convex) local cost function, and function $h$ is a non-smooth and convex regular function. For each agent $i$ in the network, $x_i\in \mathbb{R}^{n}$ is the local estimate of variable $x$. 
\begin{remark}
	The non-convexity of function $g_i$ makes it difficult to design an efficient convergent algorithm with rigorous proofs of optimality. In this paper, we prove that the proposed algorithm converges to the set of critical points. Although there exist some centralized works, it is not straightforward to extend them to distributed cases since the convergence may not hold with the influence of distributed nature.  
\end{remark}
\par Through this paper, we assume that the following standard assumptions hold for the optimization problem \eqref{opti_pro}.
\begin{assumption}\label{pro_assum}
	\begin{itemize}
		\item[(a)] For each agent $i$, $g_i$ is continuously differentiable and has a Lipschitz-continuous gradient with constant $L>0$,
		\begin{align}\label{l_assump}
		\left\|\nabla g_i(x)-\nabla g_i(y)\right\|\leq L\left\|x-y\right\|,
		\end{align}
		which implies that 
		\begin{align}\label{lfsmooth}
		g_i(x)\leq g_i(y)+\left<\nabla g_i(y),x-y\right>+\frac{L}{2}\left\|x-y\right\|^2.
		\end{align}
		\item[(b)] The regular function $h$ is convex.
		\item[(c)] There exists a scalar $G_g$ such that for each agent $i$, $\|\nabla g_i(x)\|<G_g$.
		\item[(d)] There exists a scalar $G_h$ such that for each sub-gradient $z\in\partial h(x)$, $\|z\|<G_h$.
		\item[(e)] The optimization problem owns at least one optimal solution $x^*$.
	\end{itemize}
\end{assumption}
\par Then, we propose the following distributed proximal gradient algorithm for solving (\ref{opti_pro}). For $i\in\{1,\ldots, m\}$, $k=1,\cdots$,
\begin{subequations}\label{distri_m}
	\begin{align}
	&q_{i,k+1}=x_{i,k}-\alpha \nabla g_i(x_{i,k}),\label{q_update}\\
	&v_{i,k+1}=\sum_{j=1}^m \lambda_{ij,k+1} q_{j,k+1}, \label{v_update}\\
	&x_{i,k+1}=prox_{\alpha,h}(v_{i,k+1}) \label{x_update},
	\end{align}
\end{subequations}
where $\alpha<\frac{1}{L}$ is a constant step-size which is also used in the proximal operator, $\lambda_{ij,k}$ is the $(i,j)$th element of matrix $\Phi\big(t(k)+k,t(k)\big)$, 
$$\lambda_{ij,k}=\big[\Phi\big(t(k)+k,t(k)\big)\big]_{ij},$$
where $t(k)$ is the total number of communication steps before iteration $k$ and $\Phi$ is a transition matrix, which is defined as
$$\Phi(t,s)=A(t)A(t-1)\cdots A(s+1) A(s),\ t>s\geq 0,$$
where $A(t)$ is the adjacent matrix at time $t$.

\par Before analyzing the behavior of proposed algorithm \eqref{distri_m} over a time-varying network, we introduce the following assumption.
\begin{assumption}\label{net_assum}
	Consider the undirected time-varying network with adjacent matrices $A(t)=[a_{ij}(t)]$, $t=1,2,\cdots$
	\begin{itemize}
		\item [(a)] For each $t$, the adjacent matrix $A(t)$ is doubly stochastic.
		\item [(b)] There exists a scalar $\eta\in (0,1)$ such that $a_{ii}(t)\geq \eta$ for all $i\in \{1,\cdots,m\}$. In addition, $a_{ij(t)}\geq \eta$ if if $\{i,j\}\in \mathcal E(t)$ and $a_{ij}(t)=0$ otherwise.
		\item [(c)] The time-varying graph sequence $\mathcal{G}_k$ is uniformly connected, which means that agent $j$ receives information from $i$ for infinitely many $t$. Moreover, there exists an integer $B\geq 1$ such that agent $i$ sends its information to all other agents at least once every $B$ consecutive time slots.
	\end{itemize}
\end{assumption} 
\begin{remark}
	In this assumption, part (a) guarantees that the variable estimates of neighbors impose an equal influence on the local variable estimate. Part (b) means that each agent gives significant weight to its current estimate and the estimates received from its neighbors. Part (c) states that the time-varying network is capable of exchanging information between any pair of agents in bounded time.
\end{remark}
\begin{remark}
	{Over time-varying graphs,} the proposed updating (\ref{v_update}) represents that agents perform $k$ rounds of communication steps at iteration $k$, which may be expensive as iteration number $k$ increases. However, when the time-varying graph is periodic, the updating (\ref{v_update}) is easy to compute due to the fact that $\lambda_{ij,k}$ is also periodic, which has been investigated in \cite{Nash_lou}. What's more, if the bounded intercommunication interval $B$ is known, the number of communication steps taken at iteration $k$ is significant reduced and the convergence performance is further improved, which has been discussed in \cite{proximal-convex}.   
\end{remark}

\section{Main Result}\label{proof_sec}
In this section, we present theoretical proofs for the convergence properties of proposed distributed algorithm. Let $\bar{x}_k\triangleq \frac{1}{m} \sum_{i=1}^m x_{i,k}$, $\bar{v}_k\triangleq \frac{1}{m} \sum_{i=1}^m v_{i,k}$, and $z_k\triangleq prox_{\alpha,h}(\bar{v}_k)$.
The following lemma states that the update of the average variable is viewed as an inexact centralized proximal gradient algorithm with the errors controlled by multiple communications at each iteration. 
\begin{lemma}\label{compact_lemma}
	Suppose Assumptions \ref{pro_assum} and \ref{net_assum} hold. The average variable satisfies
	\begin{align}\label{compact_m}
	\bar{x}_{k+1}\in &prox_{\alpha,h}^{\varepsilon_{k+1}} \big(\bar{x}_k-\alpha [\nabla g(\bar{x}_k)+e_{k+1}]\big),\\
	e_{k+1}=&\frac{1}{m}\sum_{i=1}^m \big(\nabla g_i(x_{i,k})-\nabla g_i(\bar{x}_k)\big),\notag\\
	 \varepsilon_{k+1}=&\|\bar{x}_{k+1}-z_{k+1}\|\big(G_h+\frac{1}{\alpha}\|z_{k+1}-\bar{v}_{k+1}\|\big)+\frac{1}{2\alpha}\|\bar{x}_{k+1}-z_{k+1}\|^2,\notag
	\end{align}
	where the inexact proximal operator ${\rm prox}_{\alpha,h}^{\varepsilon}(\cdot)$ is defined in (\ref{inprox_def}), $\nabla g(\bar{x}_k)\triangleq \frac{1}{m}\sum_{i=1}^m \nabla g_i(\bar{x}_k)$, $G_h$ is defined in Assumption \ref{pro_assum}(d), and error sequences $\{e_k\}$ and $\{\varepsilon_k\}$ satisfy
	\begin{subequations}
		\begin{align}
			&\left\|e_{k+1}\right\|\leq \frac{L}{m} \sum_{i=1}^m\left\|x_{i,k}-\bar{x}_k\right\|,\label{ek}\\
			&\varepsilon_{k+1}\leq \frac{2G_h}{m} \sum_{i=1}^m \left\|v_{i,k}-\bar{v}_k\right\|+\frac{1}{2\alpha}\big(\frac{1}{m}\sum_{i=1}^m \left\|v_{i,k}-\bar{v}_k\right\|\big)^2.\label{varepk}
		\end{align}\label{error}
	\end{subequations}
\end{lemma}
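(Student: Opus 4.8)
The plan is to reduce the distributed recursion \eqref{distri_m} to a \emph{perturbed centralized} proximal gradient step on the average iterate $\bar x_k$, and then read off the two error quantities. First I would average the three updates. Since each $A(t)$ is doubly stochastic (Assumption \ref{net_assum}(a)) and a product of doubly stochastic matrices is again doubly stochastic, the transition matrix $\Phi(t(k)+k,t(k))$ has all column sums equal to one, so $\frac1m\sum_{i=1}^m\lambda_{ij,k+1}=1$ for every $j$ and hence $\frac1m\sum_{i=1}^m v_{i,k+1}=\frac1m\sum_{j=1}^m q_{j,k+1}$. Substituting \eqref{q_update} gives $\bar v_{k+1}=\bar x_k-\frac{\alpha}{m}\sum_{i=1}^m\nabla g_i(x_{i,k})=\bar x_k-\alpha\big(\nabla g(\bar x_k)+e_{k+1}\big)$ with $e_{k+1}$ exactly as stated, and since $x_{i,k+1}={\rm prox}_{\alpha,h}(v_{i,k+1})$ we have $\bar x_{k+1}=\frac1m\sum_{i=1}^m{\rm prox}_{\alpha,h}(v_{i,k+1})$. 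The bound \eqref{ek} is then immediate from the triangle inequality and the Lipschitz property \eqref{l_assump}.

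Next I would show that $\bar x_{k+1}$ is an $\varepsilon_{k+1}$-minimizer of the proximal subproblem centered at $\bar v_{k+1}$, i.e. verify the inclusion in \eqref{compact_m} via the definition \eqref{inprox_def}. Since the exact minimizer of that subproblem is $z_{k+1}={\rm prox}_{\alpha,h}(\bar v_{k+1})$, it suffices to upper bound $\big[\tfrac{1}{2\alpha}\|\bar x_{k+1}-\bar v_{k+1}\|^2+h(\bar x_{k+1})\big]-\big[\tfrac{1}{2\alpha}\|z_{k+1}-\bar v_{k+1}\|^2+h(z_{k+1})\big]$ by $\varepsilon_{k+1}$. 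For the quadratic part I would use $\|a\|^2-\|b\|^2=\|a-b\|^2+2\langle a-b,b\rangle$ with $a=\bar x_{k+1}-\bar v_{k+1}$ and $b=z_{k+1}-\bar v_{k+1}$ (so $a-b=\bar x_{k+1}-z_{k+1}$), which after dividing by $2\alpha$ and applying Cauchy--Schwarz gives $\tfrac{1}{2\alpha}\|\bar x_{k+1}-z_{k+1}\|^2+\tfrac1\alpha\|\bar x_{k+1}-z_{k+1}\|\,\|z_{k+1}-\bar v_{k+1}\|$. For the nonsmooth part, convexity of $h$ evaluated with a subgradient \emph{at $\bar x_{k+1}$}, together with the bounded-subgradient Assumption \ref{pro_assum}(d), yields $h(\bar x_{k+1})-h(z_{k+1})\le G_h\|\bar x_{k+1}-z_{k+1}\|$. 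Summing the two estimates reproduces precisely the closed form of $\varepsilon_{k+1}$ in \eqref{compact_m}.

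Finally I would derive \eqref{varepk} from that closed form using two ingredients. First, nonexpansiveness of the proximal map, Proposition \ref{prox_proposition}(b), gives $\|\bar x_{k+1}-z_{k+1}\|=\big\|\tfrac1m\sum_{i=1}^m\big({\rm prox}_{\alpha,h}(v_{i,k+1})-{\rm prox}_{\alpha,h}(\bar v_{k+1})\big)\big\|\le\tfrac1m\sum_{i=1}^m\|v_{i,k+1}-\bar v_{k+1}\|$. Second, Proposition \ref{prox_proposition}(d) shows $\tfrac1\alpha(\bar v_{k+1}-z_{k+1})\in\partial h(z_{k+1})$, so Assumption \ref{pro_assum}(d) gives $\tfrac1\alpha\|z_{k+1}-\bar v_{k+1}\|<G_h$ and therefore $G_h+\tfrac1\alpha\|z_{k+1}-\bar v_{k+1}\|<2G_h$. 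Plugging both bounds into the expression for $\varepsilon_{k+1}$ gives \eqref{varepk}.

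I expect the only delicate point to be the second paragraph: one must orient the subgradient inequality for $h$ in the correct direction (using a subgradient at $\bar x_{k+1}$, not at $z_{k+1}$) and choose the splitting of $\|a\|^2-\|b\|^2$ so that the residual cross term is controlled by $\|z_{k+1}-\bar v_{k+1}\|$ (which is $O(\alpha G_h)$) rather than by the uncontrolled $\|\bar v_{k+1}\|$; everything else is bookkeeping resting on the doubly stochastic structure of $\Phi$ and the cited properties of ${\rm prox}_{\alpha,h}$.
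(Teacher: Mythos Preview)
Your proposal is correct and follows essentially the same route as the paper's own proof: average the updates using double stochasticity of $\Phi$ to get $\bar v_{k+1}=\bar x_k-\alpha(\nabla g(\bar x_k)+e_{k+1})$, bound the proximal-objective gap at $\bar x_{k+1}$ versus $z_{k+1}$ via the subgradient inequality at $\bar x_{k+1}$ plus the quadratic expansion, and then simplify with $\tfrac1\alpha\|z_{k+1}-\bar v_{k+1}\|\le G_h$ and nonexpansiveness. One trivial slip: column stochasticity gives $\sum_{i=1}^m\lambda_{ij,k+1}=1$, not $\tfrac1m\sum_i\lambda_{ij,k+1}=1$; with that correction your averaging identity $\bar v_{k+1}=\tfrac1m\sum_j q_{j,k+1}$ goes through exactly as you wrote.
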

\begin{proof}
	 By taking the average of \eqref{q_update} and \eqref{v_update}, 
	 \begin{align}
	 \bar{v}_{k+1}=\bar{x}_k-\alpha (\nabla g(\bar{x}_k)+e_{k+1}),
	 \end{align}
	 where 
	$$e_{k+1}=\frac{1}{m}\sum_{i=1}^m [\nabla g_i(x_{i,k})-\nabla g_i(\bar{x}_k)].$$
	Because of the Lipschitz-continuity of the gradient of $g_i(x)$,
	$$\|e_{k}\|\leq \frac{L}{m}\sum_{i=1}^m \|x_{i,k}-\bar{x}_k\|.$$
	\par Let 
	$$z_{k+1}={\rm prox}_{\alpha,h}(\bar{v}_{k+1})={\rm argmin}_x \big\{h(x)+\frac{1}{2\alpha}\|x-\bar{v}_{k+1}\|^2\big\}$$
	denote the result of the exact proximal operator. In addition, $\bar{x}_{k+1}=\frac{1}{m}\sum_{i=1}^m x_{i,k+1}=\frac{1}{m}\sum_{i=1}^m {\rm prox}_{\alpha,h}(v_{i,k+1})$. Then, the result of the proximal operator in the distributed algorithm can be seen as an approximation of $z_{k+1}$. We next relate $z_{k+1}$ and $\bar{x}_{k+1}$ by formulating the latter as an inexact proximal operator with error $\varepsilon_{k+1}$. A simple algebraic expansion gives
	\begin{align*}
		&h(\bar{x}_{k+1})+\frac{1}{2\alpha}\|\bar{x}_{k+1}-\bar{v}_{k+1}\|^2\\
	\leq&h(z_{k+1})+G_h\|\bar{x}_{k+1}-z_{k+1}\|+\frac{1}{2\alpha}\Big\{\|z_{k+1}-\bar{v}_{k+1}\|^2+2\langle z_{k+1}-\bar{v}_{k+1},\bar{x}_{k+1}-z_{k+1}\rangle+\|\bar{x}_{k+1}-z_{k+1}\|^2\Big\}\\
	=&\min_{z\in\mathbb{R}^d}\big\{h(z)+\frac{1}{2\alpha}\|z-\bar{v}_{k+1}\|^2\big\}+\|\bar{x}_{k+1}-z_{k+1}\|\big(G_h+\frac{1}{\alpha}\|z_{k+1}-\bar{v}_{k+1}\|\big)+\frac{1}{2\alpha}\|\bar{x}_{k+1}-z_{k+1}\|^2, 
	\end{align*}
	where in the inequality, we used the convexity of $h(x)$ and the bound on the subgradient $\partial h(\bar{x}_{k+1})$ to obtain $h(\bar{x}_{k+1})\leq h(z_{k+1})+G_h\|\bar{x}_{k+1}-z_{k+1}\|$, and in the equality, we used the fact that by definition, $z_{k+1}$ is the optimizer of $h(x)+\frac{1}{2\alpha}\|x-\bar{v}_{k+1}\|^2$.
	\par With this expression, we can write
	$$\bar{x}_{k+1}\in {\rm prox}_{\alpha,h}^{\varepsilon_{k+1}}(\bar{v}_{k+1}),$$
	where $$\varepsilon_{k+1}=\|\bar{x}_{k+1}-z_{k+1}\|\big(G_h+\frac{1}{\alpha}\|z_{k+1}-\bar{v}_{k+1}\|\big)+\frac{1}{2\alpha}\|\bar{x}_{k+1}-z_{k+1}\|^2.$$
	\par By definition, $z_{k+1}={\rm prox}_{\alpha,h}(\bar{v}_{k+1})$ also implies $\frac{1}{\alpha}(\bar{v}_{k+1}-z_{k+1})\in \partial h(z_{k+1})$, and therefore its norm is bounded by $G_h$. As a result,
	$$\varepsilon_{k+1}\leq 2 G_h\|\bar{x}_{k+1}-z_{k+1}\|+\frac{1}{2\alpha}\|\bar{x}_{k+1}-z_{k+1}\|^2.$$
	\par Combined with the nonexpensiveness of the proximal operator,
	\begin{align*}
	\|\bar{x}_{k+1}-z_{k+1}\|\leq &\frac{1}{m}\sum_{i=1}^m \|{\rm prox}_{\alpha,h}(v_{i,k+1})-{\rm prox}_{\alpha,h}(\bar{v}_{k+1})\|\\
	\leq & \frac{1}{m} \sum_{i=1}^m \|v_{i,k+1}-\bar{v}_{k+1}\|,
	\end{align*}
	we obtain the desired results.
\end{proof}
\par The next lemma shows that polynomial-geometric sequences are summable, which is vital for the convergence analysis of error sequences.
\begin{lemma}\label{poly_lemma}\cite[Proposition 3]{proximal-convex}
	Let $\gamma\in (0,1)$, and let
	$$P_{k,N}=\{c_N k^N+\cdots+c_1 k+c_0 | c_j\in \mathbb{R}, j=0,\cdots,N\}$$
	denote the set of all $N$-th order polynomials of $k$, where $N \in \mathbb{N}$. Then for every polynomial $p_{k}\in P_{k,N}$,
	$$\sum_{k=1}^{\infty} p_{k} \gamma^k<\infty.$$
\end{lemma}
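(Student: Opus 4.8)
The plan is to reduce the statement to the single monomial $p_k=k^N$ and then apply an elementary convergence criterion. First I would note that every $p_k\in P_{k,N}$ has the form $p_k=\sum_{j=0}^N c_j k^j$, so that for all $k\ge 1$,
\[
|p_k|\,\gamma^k\le\Big(\sum_{j=0}^N|c_j|\Big)\,k^N\gamma^k,
\]
using $k^j\le k^N$ whenever $0\le j\le N$ and $k\ge 1$. Hence it is enough to prove $\sum_{k=1}^\infty k^N\gamma^k<\infty$ for each fixed $N\in\mathbb{N}$ and each $\gamma\in(0,1)$; the stated summability of $\sum_{k=1}^\infty p_k\gamma^k$ then follows by comparison (and in particular the series converges absolutely).

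For this reduced claim I would use the ratio test on the nonnegative sequence $a_k=k^N\gamma^k$. Since
\[
\frac{a_{k+1}}{a_k}=\Big(1+\frac1k\Big)^{N}\gamma\longrightarrow \gamma<1\qquad\text{as }k\to\infty,
\]
there exist an index $k_0$ and a constant $\rho\in(\gamma,1)$ with $a_{k+1}\le\rho\,a_k$ for all $k\ge k_0$, hence $a_k\le a_{k_0}\rho^{\,k-k_0}$ for $k\ge k_0$, and the tail $\sum_{k\ge k_0}a_k$ is dominated by a convergent geometric series; adding the finitely many initial terms gives $\sum_{k=1}^\infty a_k<\infty$. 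An equally short alternative, avoiding the ratio test, is a direct comparison: writing $\gamma=e^{-c}$ with $c=-\ln\gamma>0$, the inequality $e^{x}\ge x^{N+2}/(N+2)!$ for $x\ge 0$ yields $\gamma^k=e^{-ck}\le (N+2)!\,/(ck)^{N+2}$, so $k^N\gamma^k\le\big((N+2)!/c^{N+2}\big)k^{-2}$ and $\sum_k k^{-2}<\infty$.

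There is essentially no hard step here; this is a standard real-analysis fact, and the only point requiring a moment of care is the uniform reduction from an arbitrary (possibly sign-indefinite) polynomial to the top-degree monomial, which the displayed bound above handles. One could instead argue by induction on $N$ using Abel summation to pass from $\sum k^{N-1}\gamma^k$ to $\sum k^{N}\gamma^k$, but that route is more cumbersome than necessary, so I would not pursue it.
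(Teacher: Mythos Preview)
Your argument is correct and entirely standard: the reduction to the top monomial via $|p_k|\gamma^k\le\big(\sum_j|c_j|\big)k^N\gamma^k$ is clean, and either the ratio test or the exponential comparison you give establishes $\sum_k k^N\gamma^k<\infty$. There is nothing to criticize mathematically.

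As for comparison with the paper: the paper does not actually prove this lemma. It is stated with a citation to \cite[Proposition~3]{proximal-convex} and no argument is given in the text. So your proposal supplies a self-contained proof where the paper defers to an external reference; in that sense your route is strictly more informative than what appears in the paper, while remaining elementary.
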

\par The result of this Lemma for $P_{k,N}=k^N$ will be particularly useful for the analysis in the following sections. Hence, we make the definition
\begin{align}\label{kn_poly}
S_{N}^{\gamma}\triangleq \sum_{k=1}^{\infty} k^N \gamma^k<\infty.
\end{align}
\par Before proving the summability of error sequences $\{\|e_k\|\}$ and $\{\varepsilon_k\}$, recursive expressions of the generated iterative variables are given in the next proposition.
\begin{proposition}\label{rec_ite}
	Under Assumptions \ref{pro_assum} and \ref{net_assum}, for each iteration $k\geq 2$,
	\begin{itemize}
		\item[(a)] $\sum_{i=1}^m \|q_{i,k+1}\|\leq \sum_{i=1}^m \|q_{i,k}\|+\alpha m (G_g+G_h)$,
		\item[(b)] $\sum_{i=1}^m \|x_{i,k}-x_{i,k-1}\|\leq
 2m {\Gamma} \sum_{l=1}^{k-1} \gamma^l \sum_{i=1}^m \|q_{i,l}\|+(k-1)\alpha m(G_g+G_h)$,
		\item[(c)] $\|x_{i,k}-\bar{x}_k\|\leq 2\Gamma \gamma^k \sum_{i=1}^m\|q_{i,k}\|$.
	\end{itemize}
\end{proposition}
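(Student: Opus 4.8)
\emph{Plan of proof.} The plan is to establish the three bounds in the order (a), (c), (b), since (c) is used in the proof of (b). Two ingredients recur. First, by Proposition \ref{prox_proposition}(c), for $k\ge 2$ one may write $x_{i,k}=v_{i,k}-\alpha z_{i,k}$ with $z_{i,k}\in\partial h(x_{i,k})$, hence $\|z_{i,k}\|<G_h$ by Assumption \ref{pro_assum}(d). Second, $\lambda_{ij,k}=[\Phi(t(k)+k,t(k))]_{ij}$ is an entry of a product of $k$ doubly stochastic matrices of a uniformly connected graph sequence, so under Assumption \ref{net_assum} the matrix $[\lambda_{ij,k}]$ is itself doubly stochastic and obeys the geometric mixing bound $|\lambda_{ij,k}-\tfrac1m|\le\Gamma\gamma^k$; this estimate (the standard one for ergodic products of doubly stochastic matrices over uniformly connected graphs) is the key input and is assumed available from the preceding material.

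\emph{Parts (a) and (c).} For (a), inserting $x_{i,k}=v_{i,k}-\alpha z_{i,k}$ into \eqref{q_update} gives $q_{i,k+1}=v_{i,k}-\alpha(z_{i,k}+\nabla g_i(x_{i,k}))$, so $\|q_{i,k+1}\|\le\|v_{i,k}\|+\alpha(G_g+G_h)$ by Assumption \ref{pro_assum}(c)--(d); summing over $i$ and using column-stochasticity, $\sum_i\|v_{i,k}\|\le\sum_i\sum_j\lambda_{ij,k}\|q_{j,k}\|=\sum_j\|q_{j,k}\|$, which is (a). For (c), write $x_{i,k}-\bar x_k=\tfrac1m\sum_j(x_{i,k}-x_{j,k})$ and bound each term by nonexpansiveness (Proposition \ref{prox_proposition}(b)) as $\|x_{i,k}-x_{j,k}\|\le\|v_{i,k}-v_{j,k}\|$; since $v_{i,k}-v_{j,k}=\sum_l(\lambda_{il,k}-\lambda_{jl,k})q_{l,k}$ and $|\lambda_{il,k}-\lambda_{jl,k}|\le|\lambda_{il,k}-\tfrac1m|+|\lambda_{jl,k}-\tfrac1m|\le 2\Gamma\gamma^k$, this gives $\|v_{i,k}-v_{j,k}\|\le 2\Gamma\gamma^k\sum_l\|q_{l,k}\|$, and averaging over $j$ yields (c).

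\emph{Part (b).} Decompose $x_{i,k}-x_{i,k-1}=(x_{i,k}-\bar x_k)-(x_{i,k-1}-\bar x_{k-1})+(\bar x_k-\bar x_{k-1})$; the first two terms are controlled by (c). For the drift of the mean, averaging \eqref{q_update}--\eqref{x_update} and using double-stochasticity gives $\bar v_k=\bar q_k=\bar x_{k-1}-\tfrac{\alpha}{m}\sum_i\nabla g_i(x_{i,k-1})$ and $\bar x_k=\bar v_k-\tfrac{\alpha}{m}\sum_i z_{i,k}$, so $\|\bar x_k-\bar x_{k-1}\|\le\alpha(G_g+G_h)$ by Assumption \ref{pro_assum}(c)--(d). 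Summing over $i$ then gives a bound of the form $\sum_i\|x_{i,k}-x_{i,k-1}\|\le 2m\Gamma\gamma^k\sum_i\|q_{i,k}\|+2m\Gamma\gamma^{k-1}\sum_i\|q_{i,k-1}\|+\alpha m(G_g+G_h)$. The two $\gamma$-weighted terms are then absorbed into the truncated geometric series $\sum_{l=1}^{k-1}\gamma^l\sum_i\|q_{i,l}\|$ --- each being, after an index shift handled with part (a) (which costs only further $\alpha m(G_g+G_h)$ terms), dominated by one of its summands because $\gamma<1$ --- while the single drift term is trivially bounded by $(k-1)\alpha m(G_g+G_h)$ for $k\ge 2$; this yields the stated inequality.

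The step expected to be the main obstacle is the accounting in part (b): reducing the clean two-term geometric bound to exactly the claimed right-hand side requires the index shifts and repeated appeals to (a) just sketched, together with some care at the base index $k=2$, where $x_{i,1}$ is the initialization rather than a proximal output, so $x_{i,1}=v_{i,1}-\alpha z_{i,1}$ is unavailable and the consensus term $x_{i,1}-\bar x_1$ must be treated separately. Everything else is routine once the geometric mixing bound $|\lambda_{ij,k}-\tfrac1m|\le\Gamma\gamma^k$ is in hand; should it not have been established earlier, it must first be supplied by the standard ergodicity argument for products of doubly stochastic matrices under uniform connectivity (Assumption \ref{net_assum}).
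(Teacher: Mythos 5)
Your parts (a) and (c) are correct and essentially identical to the paper's own argument: (a) via $x_{i,k}=v_{i,k}-\alpha z_{i,k}$, the gradient/subgradient bounds, and stochasticity of $[\lambda_{ij,k}]$; (c) via nonexpansiveness of the proximal map and the mixing estimate $|\lambda_{ij,k}-\tfrac1m|\le\Gamma\gamma^k$ (which the paper imports from Nedi\'c--Ozdaglar rather than reproving, exactly as you propose). Your observation about the base index --- that $x_{i,1}$ is an initialization rather than a proximal output, so $x_{i,1}=v_{i,1}-\alpha z_{i,1}$ is not automatic --- is a fair point that the paper itself glosses over.

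Part (b), however, has a genuine gap. Your decomposition $x_{i,k}-x_{i,k-1}=(x_{i,k}-\bar x_k)-(x_{i,k-1}-\bar x_{k-1})+(\bar x_k-\bar x_{k-1})$ produces, after applying (c) at both times, the intermediate bound
\begin{equation*}
\sum_{i=1}^m\|x_{i,k}-x_{i,k-1}\|\le 2m\Gamma\gamma^{k}\sum_{i=1}^m\|q_{i,k}\|+2m\Gamma\gamma^{k-1}\sum_{i=1}^m\|q_{i,k-1}\|+\alpha m(G_g+G_h),
\end{equation*}
whose first term involves the \emph{current} iterate $q_{i,k}$, while the claimed right-hand side only contains $\sum_{l=1}^{k-1}\gamma^l\sum_i\|q_{i,l}\|$ and $(k-1)$ copies of $\alpha m(G_g+G_h)$. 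The ``index shift via (a)'' you sketch does not absorb it: already at $k=2$ the target is $2m\Gamma\gamma\sum_i\|q_{i,1}\|+\alpha m(G_g+G_h)$, your bound is that same quantity \emph{plus} the strictly positive term $2m\Gamma\gamma^2\sum_i\|q_{i,2}\|$, and there is no remaining slack in the target to receive it (the single $l=1$ summand and the single $\alpha m(G_g+G_h)$ budget are both already spent). For general $k$ the same accounting problem persists, since (a) only controls the \emph{growth} of $\sum_i\|q_{i,l}\|$, not its size relative to earlier terms, and the leftover constants it generates are multiplied by $2m\Gamma$. The paper avoids the forward term entirely by a different decomposition: substitute $x_{i,k}=\sum_j\lambda_{ij,k}\big(x_{j,k-1}-\alpha\nabla g_j(x_{j,k-1})\big)-\alpha z_{i,k}$ and use row-stochasticity $\sum_j\lambda_{ij,k}=1$ to get $\sum_i\|x_{i,k}-x_{i,k-1}\|\le\sum_i\sum_j\lambda_{ij,k}\|x_{j,k-1}-x_{i,k-1}\|+\alpha m(G_g+G_h)$, then apply nonexpansiveness and the mixing bound at time $k-1$ only, yielding $2m\Gamma\gamma^{k-1}\sum_j\|q_{j,k-1}\|+\alpha m(G_g+G_h)$, which is trivially dominated by the stated right-hand side. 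You should replace your part (b) argument with this one (or, alternatively, prove and carry the weaker two-term bound through the subsequent induction --- but that is not the inequality as stated).
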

\begin{proof}
	By (\ref{x_update}) and Proposition \ref{prox_proposition} (c), there exists $z_{i,k}\in \partial h(x_{i,k})$ such that
	\begin{align}\label{x_up}
	x_{i,k}=v_{i,k}-\alpha z_{i,k}.
	\end{align}
	Since function $h$ has bounded subgradients by Assumption \ref{pro_assum}(d), 
	\begin{align}\label{x_bound}
	\left\|x_{i,k}-v_{i,k}\right\|\leq \alpha G_h.
	\end{align}
	\par (a) Taking norm of (\ref{q_update}) and summing over $i$,
	\begin{align}\label{q_bound}
	\sum_{i=1}^m \left\|q_{i,k}\right\|=&\sum_{i=1}^m \left\|x_{i,k-1}-\alpha \nabla {g}_i (x_{i,k-1})\right\|\notag \\
	\leq &\sum_{i=1}^m \left\|x_{i,k-1}\right\|+\alpha m G_g.
	\end{align}
	It follows from (\ref{x_bound}) and $\left\|x_{i,k-1}\right\|-\left\|v_{i,k-1}\right\|\leq\left\|x_{i,k-1}-v_{i,k-1}\right\|$ that $\left\|x_{i,k-1}\right\|\leq \left\|v_{i,k-1}\right\|+\alpha G_h$. Since $v_{i,k-1}$ is a convex combination of $\{q_{j,k-1}\}_{j=1}^m$ by (\ref{v_update}), 
	\begin{align}\label{sum_vq}
	\sum_{i=1}^m \left\|v_{i,k-1}\right\|\leq \sum_{i=1}^m \left\|q_{i,k-1}\right\|.
	\end{align}
	Substituting the above two inequalities in (\ref{q_bound}),
	\begin{align*}
	\sum_{i=1}^m \left\|q_{i,k}\right\|\leq \sum_{i=1}^m \left\|q_{i,k-1}\right\|+\alpha m (G_g+G_h).
	\end{align*}
	\par (b) By (\ref{x_up}) and the proposed algorithm (\ref{distri_m}), 
	$x_{i,k}=v_{i,k}-\alpha z_{i,k}= \sum_{j=1}^m \lambda_{ij,k}\big(x_{j,k-1}-\alpha \nabla g_j (x_{j ,k-1})\big)-\alpha z_{i,k}$.
	Then,
	\begin{align}\label{x_gapnorm}
	&\sum_{i=1}^m \|x_{i,k}-x_{i,k-1}\|\notag\\
	\leq &\sum_{i=1}^m\sum_{j=1}^m \lambda_{ij,k}\|x_{j,k-1}-x_{i,k-1}\|+\alpha m (G_g+G_h).
	\end{align}
	Next, consider the term $\sum_{i=1}^m\sum_{j=1}^m \lambda_{ij,k}\|x_{j,k-1}-x_{i,k-1}\|$. By the nonexpansiveness of the proximal operator, 
	\begin{align}\label{non_expen}
	\|x_{j,k-1}-x_{i,k-1}\|\leq \|v_{j,k-1}-v_{i,k-1}\|.
	\end{align}
	 In addition, the bound of the distance between iterates $v_{i,k}$ and $\bar{v}_k$ satisfies
	\begin{align}\label{v_gapnorm}
	\|v_{i,k}-\bar{v}_k\|=& \Big\|\sum_{j=1}^m \lambda_{ij,k} q_{j,k}-\frac{1}{m}q_{j,k}\Big\|\notag \\
	\leq & \sum_{j=1}^m \left|\lambda_{ij,k}-\frac{1}{m}\right|\|q_{j,k}\|\notag \\
	\leq & \Gamma \gamma^k \sum_{j=1}^m \|q_{j,k}\|,
	\end{align}
	where the last inequality follows from Proposition 1 \cite{distri-nedich}, and $\Gamma = 2 \frac{1+\eta^{-B_0}}{1-\eta^{B_0}}$, $\gamma = (1-\eta^{B_0})^{\frac{1}{B_0}}$, $B_0=(m-1)B$, $\eta$ is the lower bound in Assumption \ref{net_assum}(b), $B$ is the intercommunication interval bound in Assumption \ref{net_assum}(c). Then, 
	\begin{align}\label{vsub}
	&\sum_{i=1}^m\sum_{j=1}^m \lambda_{ij,k}\|v_{j,k-1}-v_{i,k-1}\|
	\notag\\
	\leq & \sum_{i=1}^m\sum_{j=1}^m \lambda_{ij,k}\big(\|v_{i,k-1}-\bar{v}_{k-1}\|+\|v_{j,k-1}-\bar{v}_{k-1}\|\big)\notag\\
	\leq & 2m \Gamma \gamma^{k-1}\sum_{j=1}^m\|q_{j,k-1}\|,
	\end{align}
	where the last inequality follows from (\ref{v_gapnorm}). Substituting \eqref{non_expen} and \eqref{vsub} to (\ref{x_gapnorm}), 
	\begin{align}
	&\sum_{i=1}^m\|x_{i,k}-x_{i,k-1}\|\notag \\
	\leq& 2m \Gamma \gamma^{k-1}\sum_{j=1}^m\|q_{j,k-1}\|+\alpha m (G_g+G_h) \label{xkgap}\\
	\leq & 2m {\Gamma} \sum_{l=1}^{k-1} \gamma^l \sum_{i=1}^m \|q_{i,l}\|+(k-1)\alpha m(G_g+G_h).\notag
	\end{align}
	\par (c) By the definition of $\bar{x}_k\triangleq \frac{1}{m}\sum_{j=1}^m x_{j,k}$,
	\begin{align*}
	&\sum_{i=1}^m \|x_{i,k}-\frac{1}{m}\sum_{j=1}^m x_{j,k}\|\\
	=&\sum_{i=1}^m \|\frac{1}{m}\sum_{j=1}^m( x_{i,k}- x_{j,k})\|\\
	\leq& \frac{1}{m}\sum_{i=1}^m \sum_{j=1}^m\|v_{i,k}-v_{j,k}\|\\
	\leq & 2\Gamma \gamma^k \sum_{i=1}^m\|q_{i,k}\|,
	\end{align*}
where the first inequality is from nonexpansiveness of the proximal operator and the last inequality follows from \eqref{v_gapnorm}.
\end{proof}
By Proposition \ref{rec_ite} and Lemma 1 in \cite{proximal-convex}, there is a polynomial bound on $\sum_{i=1}^m \|q_{i,k}\|$, which is stated in the following lemma. The proof is omitted since it is similar to the proof of Lemma 1 in \cite{proximal-convex}.
\begin{lemma}\label{qbound_lemma}
		Under Assumptions \ref{pro_assum} and \ref{net_assum}, for the proposed algorithm (\ref{distri_m}), there exist non-negative scalars $C_q=C_q(q_{1,2},\cdots,q_{m,2})$, $C_q^1=C_q^1(m,\Gamma,C_q,C_q^{2})$, $C_q^{2}=C_q^{2}(m,\alpha,G_g,G_h)$ such that for iteration $k\geq 2$,
	\begin{align*}
	\sum_{i=1}^m\|q_{i,k}\|\leq C_q+C_q^1 k+C_q^2 k^2.
	\end{align*}
\end{lemma}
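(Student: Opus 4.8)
The plan is to read the bound straight off the recursions already established in Proposition \ref{rec_ite}. Write $Q_k \triangleq \sum_{i=1}^m\|q_{i,k}\|$. The quickest route is to telescope Proposition \ref{rec_ite}(a): since $Q_{k+1}\le Q_k+\alpha m(G_g+G_h)$ for every $k\ge 2$, summing from $k=2$ gives $Q_k\le Q_2+(k-2)\alpha m(G_g+G_h)$ for all $k\ge 2$, which is already the claimed estimate (in fact a linear one) with $C_q=Q_2=\sum_{i=1}^m\|q_{i,2}\|$, $C_q^1=\alpha m(G_g+G_h)$, and $C_q^2=0$. So no genuine difficulty remains once Proposition \ref{rec_ite}(a) is in hand.

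To instead reproduce the statement in the literal form quoted from \cite[Lemma 1]{proximal-convex} — with a genuine $k^2$ term and the indicated dependencies of the constants — I would argue in two stages. First, the linear bound above makes the consensus-coupling sum finite: by Lemma \ref{poly_lemma} (finiteness of $S_0^\gamma$ and $S_1^\gamma$ in \eqref{kn_poly}) one has $\sum_{l=1}^{\infty}\gamma^l Q_l\le Q_2 S_0^\gamma+\alpha m(G_g+G_h)S_1^\gamma=:\Sigma<\infty$. Second, I would bound $Q_{k+1}\le\sum_{i=1}^m\|x_{i,k}\|+\alpha m G_g$ via \eqref{q_update} and Assumption \ref{pro_assum}(c), and then telescope the displacement estimate of Proposition \ref{rec_ite}(b) from iteration $2$ to $k$: the terms $2m\Gamma\sum_{j=1}^{l-1}\gamma^j Q_j$ accumulate to at most $2m\Gamma k\Sigma$ (an $O(k)$ contribution), while the terms $(l-1)\alpha m(G_g+G_h)$ accumulate to at most $\tfrac12\alpha m(G_g+G_h)k^2$. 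Collecting by powers of $k$ one reads off $C_q^2=\tfrac12\alpha m(G_g+G_h)$ (a function of $m,\alpha,G_g,G_h$), $C_q^1=2m\Gamma\Sigma$ (a function of $m$, $\Gamma$ and, through $\Sigma$, of $C_q$ and $C_q^2$), and $C_q$ collecting $\sum_{i=1}^m\|x_{i,2}\|+\alpha m G_g$ together with the residual constant terms; using $\sum_{i=1}^m\|x_{i,2}\|\le Q_2+\alpha m G_h$ from \eqref{v_update}, \eqref{x_update}, Proposition \ref{prox_proposition}(c) and Assumption \ref{pro_assum}(d), this makes $C_q$ a finite function of $q_{1,2},\dots,q_{m,2}$.

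The only delicate point is the self-referential term $\sum_l\gamma^l Q_l$: a priori nothing in the raw recursions prevents $Q_l$ from growing fast enough to make it diverge, and the whole argument hinges on this sum being a constant rather than a function of $k$. This is exactly what Proposition \ref{rec_ite}(a) and Lemma \ref{poly_lemma} together resolve — (a) pins $Q_l$ to (linear) polynomial growth, and the geometric weights $\gamma^l<1$ then force $\sum_l\gamma^l Q_l$ to be summable. Once that circularity is broken, the remainder is the routine telescoping and term-collecting sketched above.
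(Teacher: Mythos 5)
Your proof is correct, but it takes a genuinely different route from the paper's. The paper proves the lemma by induction on $k$: it first \emph{weakens} Proposition \ref{rec_ite}(a) to the recursion \eqref{scale_a}, which adds the displacement term $\sum_{i=1}^m\|x_{i,k}-x_{i,k-1}\|$ to the right-hand side, then substitutes the induction hypothesis into Proposition \ref{rec_ite}(b), invokes Lemma \ref{poly_lemma} to bound the resulting geometric sum, and determines $C_q, C_q^1, C_q^2$ by comparing coefficients of powers of $k$. Your first paragraph observes, correctly, that Proposition \ref{rec_ite}(a) as actually stated telescopes on its own to the \emph{linear} bound $Q_k\le Q_2+(k-2)\alpha m(G_g+G_h)$, which already implies the lemma (with $C_q^2=0$, formally still a function of $m,\alpha,G_g,G_h$); this is more elementary and in fact stronger than the stated quadratic bound, and it exposes that the paper's detour through \eqref{scale_a} and the inductive coefficient-matching is unnecessary here — it appears to be inherited from the template of \cite[Lemma 1]{proximal-convex}, where the recursion is genuinely coupled to the displacement term. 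Your second stage, which first uses the linear bound to make $\sum_l\gamma^l Q_l$ a finite constant $\Sigma$ and then telescopes Proposition \ref{rec_ite}(b) to recover the quadratic form with $C_q^2=\tfrac12\alpha m(G_g+G_h)$ and $C_q^1=2m\Gamma\Sigma$, reaches essentially the same constants as the paper but replaces the induction (and its implicit resolution of the self-referential sum) by an explicit two-pass argument; this is a cleaner way to break the circularity you correctly identify as the only delicate point. The only caveat is cosmetic: if one insists on the literal functional dependencies $C_q^1=C_q^1(m,\Gamma,C_q,C_q^2)$ announced in the statement, your second stage is the version to quote, since the purely linear bound produces a $C_q^1$ that does not involve $\Gamma$.
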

\begin{proof}
We proceed by induction on $k$. First, we show that the result holds for $k=2$ by choosing $C_q=\sum_{i=1}^m\|q_{i,2}\|$. It suffices to show that, given the initial points $x_{i,1}$, $\sum_{i=1}^m\|q_{i,2}\|$ is bounded.
\par Indeed, by \eqref{q_bound},
$$\sum_{i=1}^m\|q_{i,2}\|\leq \sum_{i=1}^m \|x_{i,1}\|+\alpha m G_g\leq \sum_{i=1}^m \|q_{i,1}\|+\alpha m (G_g+G_h)<\infty,$$
where the second inequality holds because of \eqref{x_up} and \eqref{sum_vq}. Therefore, $C_q=\sum_{i=1}^m\|q_{i,2}\|<\infty$ is a valid choice.
\par We scale Proposition \ref{rec_ite}(a) to 
\begin{align}\label{scale_a}
\sum_{i=1}^m \left\|q_{i,k+1}\right\|\leq \sum_{i=1}^m \left\|q_{i,k}\right\|+\alpha m (G_g+G_h)+\sum_{i=1}^m \|x_{i,k}-x_{i,k-1}\|.
\end{align}
 Now suppose the result holds for some positive integer $k\geq 2$. We show that it also holds for $k+1$.
\par Substituting the induction hypothesis for $k$ into Proposition \ref{rec_ite}(b), we have 
$$\sum_{i=1}^m \|x_{i,k}-x_{i,k-1}\|\leq 2m\Gamma \sum_{l=1}^{k-1}\gamma^l (C_q+C_q^1 l+C_q^2 l^2)+(k-1)\alpha m (G_g+G_h).$$
\par By Lemma \ref{poly_lemma} and \eqref{kn_poly}, there exist constants $S_{0}^{\gamma}, S_{1}^{\gamma}, S_2^{\gamma}$ such that 
$$\sum_{l=1}^{\infty} \gamma^l (C_q+C_q^1 l+C_q^2 l^2)\leq C_q S_0^{\gamma}+C_q^1S_1^{\gamma}+C_q^2 S_2^{\gamma}.$$
\par  Then, by induction hypothesis,
$$\sum_{i=1}^m \|q_i^{k+1}\|\leq C_q+C_q^1 k+C_q^2 k^2+\alpha m (G_g+G_h)+2m\Gamma (C_q S_0^{\gamma}+C_q^1S_1^{\gamma}+C_q^2 S_2^{\gamma})+(k-1)\alpha m (G_g+G_h).$$
Comparing coefficients, we see that the right-hand side can be bounded by $C_q+C_q^1 (k+1)+C_q^2(k+1)^2$ if $\alpha m (G_g+G_h)<2C_q^2$ for the coefficient of $k$, and $2m\Gamma (C_q S_0^{\gamma}+C_q^1S_1^{\gamma}+C_q^2 S_2^{\gamma})\leq C_q^1+C_q^2$ for the constant coefficient. Therefore, the induction hypothesis holds for $k+1$ if we take
\begin{align*}
C_q&=\sum_{i=1}^m \|q_{i,2}\|,\\
C_q^1&=\frac{2m\Gamma C_q S_0^{\gamma}+(2m\Gamma S_2^{\gamma}-1)C_q^2}{2m\Gamma S_1^{\gamma}-1},\\
C_q^2&=\frac{\alpha m}{2}(G_g+G_h).
\end{align*}
\end{proof}
Now, by the Lemma \ref{qbound_lemma} and Proposition \ref{rec_ite}, the boundedness of summabilities (defined in Lemma \ref{poly_lemma}) of error sequences $\{\|e_k\|\}$ and $\{\varepsilon_k\}$ is proved in the following proposition. 
\begin{proposition}\label{e_sum_pro}
		Under Assumptions \ref{pro_assum} and \ref{net_assum}, for sequences $\{e_{k}\}$ and $\{\varepsilon_k\}$ defined in (\ref{error}), $\sum_{k=1}^{\infty} \left\|e_k\right\|<\infty$, $\sum_{k=1}^{\infty} {\varepsilon_k}<\infty$ and $\sum_{k=1}^{\infty} \sqrt{\varepsilon_k}<\infty$.
\end{proposition}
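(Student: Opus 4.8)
The plan is to reduce all three claims to the summability of polynomial-geometric sequences, i.e.\ to Lemma~\ref{poly_lemma}, by chaining together bounds that are already available. First I would feed the consensus-dispersion estimates into the error bounds of Lemma~\ref{compact_lemma}. Combining \eqref{ek} with Proposition~\ref{rec_ite}(c) and then with Lemma~\ref{qbound_lemma}, one gets, for every $k\geq 2$,
\[
\|e_{k+1}\|\le \frac{L}{m}\sum_{i=1}^m\|x_{i,k}-\bar x_k\|\le 2L\Gamma\,\gamma^{k}\sum_{i=1}^m\|q_{i,k}\|\le 2L\Gamma\,\gamma^{k}\big(C_q+C_q^1 k+C_q^2 k^2\big).
\]
Likewise, writing $b_k:=\Gamma\gamma^{k}\big(C_q+C_q^1 k+C_q^2 k^2\big)$ and using \eqref{varepk}, \eqref{v_gapnorm} and Lemma~\ref{qbound_lemma}, we have $\tfrac1m\sum_{i=1}^m\|v_{i,k}-\bar v_k\|\le b_k$ and hence $\varepsilon_{k+1}\le 2G_h\,b_k+\tfrac{1}{2\alpha}\,b_k^2$ for $k\geq 2$.

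Second, for $\sum_k\|e_k\|$ and $\sum_k\varepsilon_k$ I would apply Lemma~\ref{poly_lemma} (and the notation \eqref{kn_poly}) directly: the bound on $\|e_{k+1}\|$ is a degree-$2$ polynomial in $k$ times $\gamma^{k}$ with $\gamma\in(0,1)$, so it is summable; in $\varepsilon_{k+1}$ the term $2G_h b_k$ is again polynomial times $\gamma^{k}$, while $b_k^2$ is a polynomial of degree at most $4$ in $k$ times $(\gamma^2)^{k}$ with $\gamma^2\in(0,1)$, hence also summable. Adding the finitely many initial terms (the indices $k=1$, and whatever convention is used for $e_1$), both series are finite.

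Third, for $\sum_k\sqrt{\varepsilon_k}$ I would use $\sqrt{u+v}\le\sqrt u+\sqrt v$ to split
\[
\sqrt{\varepsilon_{k+1}}\le \sqrt{2G_h}\,\sqrt{b_k}+\tfrac{1}{\sqrt{2\alpha}}\,b_k .
\]
The second summand is already controlled. For the first, $\sqrt{b_k}=\sqrt{\Gamma}\,(\sqrt\gamma)^{k}\sqrt{C_q+C_q^1 k+C_q^2 k^2}$, and since $\sqrt{C_q+C_q^1 k+C_q^2 k^2}\le \sqrt{C_q}+\sqrt{C_q^1}\,k+\sqrt{C_q^2}\,k$ for $k\geq 1$, this is bounded by a degree-$1$ polynomial in $k$ times $(\sqrt\gamma)^{k}$ with $\sqrt\gamma\in(0,1)$; Lemma~\ref{poly_lemma} again gives summability, and $\sum_k\sqrt{\varepsilon_k}<\infty$ follows.

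I do not expect a genuine obstacle: once the polynomial-in-$k$ bound on $\sum_{i=1}^m\|q_{i,k}\|$ (Lemma~\ref{qbound_lemma}) and the geometric contraction factor $\gamma^{k}$ (Proposition~\ref{rec_ite}(c), \eqref{v_gapnorm}) are in hand, the proposition is bookkeeping. The one place meriting a little care is the square-root term, where one must first split the sum under the root and then absorb $\sqrt{\mathrm{poly}(k)}$ into a polynomial (equivalently, into the strictly smaller geometric rate $\sqrt\gamma$); this is exactly why the statement isolates $\sqrt{\varepsilon_k}$ from $\varepsilon_k$, and why a square-summable-but-not-summable pathology does not arise here.
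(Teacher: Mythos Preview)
Your proposal is correct and follows essentially the same route as the paper: bound $\|e_{k+1}\|$ via \eqref{ek}, Proposition~\ref{rec_ite}(c), and Lemma~\ref{qbound_lemma}; bound $\varepsilon_{k}$ via \eqref{varepk}, \eqref{v_gapnorm}, and Lemma~\ref{qbound_lemma}; then split $\sqrt{\varepsilon_{k}}$ using $\sqrt{u+v}\le\sqrt u+\sqrt v$ and the crude estimate $\sqrt{C_q+C_q^1 k+C_q^2 k^2}\le\sqrt{C_q}+\sqrt{C_q^1}\,k+\sqrt{C_q^2}\,k$, so that every term is polynomial-geometric and Lemma~\ref{poly_lemma} applies. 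The paper's proof is the same argument with the same inequalities, modulo trivial index shifts.
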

\begin{proof}
	By Lemma \ref{poly_lemma}, it suffices to show that these error sequences are polynomial-geometric sequence.
	\par (a) By Proposition \ref{rec_ite}(c),
	\begin{align}\label{xsubbar}
	\frac{1}{m} \sum_{i=1}^m \|x_{i,k}-\bar{x}_k\| \leq  2 \Gamma \gamma^k \sum_{i=1}^m \|q_{i,k}\|.
	\end{align} 
	 It follows from (\ref{ek}) and \eqref{xsubbar} that
	\begin{align}
	\|e_{k+1}\|&\leq 2L\Gamma \gamma^{k} \sum_{i=1}^m\|q_{i,k}\|. \label{eksumma}
	\end{align}
	In addition, by Lemma \ref{qbound_lemma}, there exists  $\sum_{i=1}^m\|q_{i,k}\|\leq C_q+C_q^1 k+C_q^2 k^2$ such that
	\begin{align}
	\|e_k\|\leq 2mL\Gamma \gamma^{k-1} \big(C_q+C_q^1 (k-1)+C_q^2 (k-1)^2\big),
	\end{align}
	which implies that $\{\|e_k\|\}$ is a polynomial-geometric sequence.
	\par (b) It follows from (\ref{v_gapnorm}), (\ref{varepk}) and Lemma \ref{qbound_lemma} that 
	\begin{align*}
	\varepsilon_k\leq &2G_h\Gamma \gamma^k(C_q+C_q^1 k+C_q^2 k^2)+\frac{1}{2\alpha}\big[\Gamma \gamma^k (C_q+C_q^1 k+C_q^2 k^2)\big]^2.
	\end{align*}
	Using the fact that $\sqrt{a+b}\leq \sqrt{a}+\sqrt{b}$ for all nonnegative real numbers $a,b$, 
	\begin{align*}
	\sqrt{\varepsilon_k}\leq & \sqrt{2G_h \Gamma} \sqrt{\gamma^k}\big(\sqrt{C_q}+\sqrt{C_q^1}k+\sqrt{C_q^2}k\big)+\frac{1}{\sqrt{2\alpha}} \Gamma \gamma^k(C_q+C_q^1 k+C_q^2 k^2).
	\end{align*} 
	Therefore, both sequences $\{\varepsilon_{k}\}$ and $\{\sqrt{\varepsilon_{k}}\}$ are polynomial-geometric sequences.
\end{proof}
Next, we prove that all local variables achieve consensus and converge to the average.
\begin{theorem}\label{ave_theo}
	Under Assumptions \ref{pro_assum} and \ref{net_assum}, $\lim_{k\to \infty} \|x_{i,k}-\bar{x}_k\|=0$ for all $i=1,\cdots,m$.
\end{theorem}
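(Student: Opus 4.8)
The plan is to combine the two estimates already established: the geometric-in-$k$ bound on the consensus error from Proposition \ref{rec_ite}(c) and the polynomial-in-$k$ bound on $\sum_{i=1}^m\|q_{i,k}\|$ from Lemma \ref{qbound_lemma}. Concretely, for each $i$ and each $k\geq 2$, Proposition \ref{rec_ite}(c) gives
\begin{align*}
\|x_{i,k}-\bar{x}_k\|\leq 2\Gamma\gamma^k\sum_{i=1}^m\|q_{i,k}\|,
\end{align*}
and substituting the bound $\sum_{i=1}^m\|q_{i,k}\|\leq C_q+C_q^1 k+C_q^2 k^2$ from Lemma \ref{qbound_lemma} yields
\begin{align*}
\|x_{i,k}-\bar{x}_k\|\leq 2\Gamma\gamma^k\big(C_q+C_q^1 k+C_q^2 k^2\big).
\end{align*}

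The remaining step is to observe that the right-hand side is a polynomial-geometric term with $\gamma\in(0,1)$, hence it converges to $0$ as $k\to\infty$. I would justify this either directly, by noting that for any fixed $N\in\mathbb{N}$ one has $\lim_{k\to\infty}k^N\gamma^k=0$ since geometric decay dominates polynomial growth, or more economically by invoking Lemma \ref{poly_lemma}: the series $\sum_{k=1}^\infty k^N\gamma^k$ converges (this is exactly $S_N^\gamma<\infty$ in \eqref{kn_poly}), so its general term tends to zero, and therefore $\gamma^k(C_q+C_q^1 k+C_q^2 k^2)\to 0$. Taking limits in the displayed inequality and using nonnegativity of the norm gives $\lim_{k\to\infty}\|x_{i,k}-\bar{x}_k\|=0$ for every $i\in\{1,\dots,m\}$.

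There is essentially no hard part here: the theorem is a direct consequence of the machinery built up in Proposition \ref{rec_ite} and Lemmas \ref{poly_lemma}--\ref{qbound_lemma}, where the real work — controlling the growth of $\sum_i\|q_{i,k}\|$ against the mixing rate $\gamma$ induced by the time-varying doubly stochastic matrices — has already been done. The only point requiring a word of care is that the bound from Proposition \ref{rec_ite}(c) is stated for $k\geq 2$, so the limit statement is unaffected by the finitely many early iterates; I would simply note this and conclude. If one wanted a slightly stronger phrasing, the same estimate in fact shows $\sum_{k=1}^\infty\|x_{i,k}-\bar{x}_k\|<\infty$, i.e. summable consensus error, which is what later convergence-rate arguments will rely on.
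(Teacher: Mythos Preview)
Your proposal is correct and matches the paper's proof essentially step for step: the paper also invokes Proposition \ref{rec_ite}(c), then combines Lemmas \ref{poly_lemma} and \ref{qbound_lemma} to conclude (via summability of the polynomial-geometric bound) that $\|x_{i,k}-\bar{x}_k\|\to 0$. Your additional remark that one even gets $\sum_{k}\|x_{i,k}-\bar{x}_k\|<\infty$ is exactly the intermediate observation the paper uses before passing to the limit.
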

\begin{proof}
	By Proposition \ref{rec_ite} (c), 
	 $$\|x_{i,k}-\bar{x}_k\|\leq 2\Gamma \gamma^k \sum_{i=1}^m\|q_{i,k}\|.$$
	By Lemma \ref{poly_lemma} and Lemma \ref{qbound_lemma}, $\sum_{k=1}^\infty \|x_{i,k}-\bar{x}_k\|$ is bounded. Then, by monotone convergence theorem and Cauchy condition, we obtain
	{$$\lim_{k\to \infty} \|x_{i,k}-\bar{x}_k\|=0.$$}
	Therefore, local variables achieve consensus and converge to the average $\bar{x}_k$ as $k\to \infty$. 
\end{proof}
\par The next vital lemma characterizes $\partial_{\varepsilon_k}h(x_k)$, which is the $\varepsilon_k$-subdifferential of $h$ at $x_k$. The proof has been studied in Lemma 2 of  \cite{Schmidt2012}.
\begin{lemma}\label{pk_lemma}
	If $\bar{x}_k$ is an $\varepsilon_{k}$-optimal solution to \eqref{prox_ope} in the sense of \eqref{inprox_def} with $y=\bar{x}_{k-1}-\alpha (\nabla g(\bar{x}_{k-1})+e_k)$, then there exists $p_k\in \mathbb{R}^n$ such that $\|p_k\|\leq \sqrt{2\alpha \varepsilon_k}$ and 
	$$\frac{1}{\alpha}(\bar{x}_{k-1}-\bar{x}_{k}-\alpha\nabla g(\bar{x}_{k-1})-\alpha e_{k}-p_{k})\in \partial_{\varepsilon_{k}}h(\bar{x}_{k}).$$
\end{lemma}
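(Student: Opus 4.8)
The plan is to exploit the $\tfrac{1}{\alpha}$-strong convexity of the proximal objective $\phi(x)\triangleq\tfrac{1}{2\alpha}\|x-y\|^{2}+h(x)$, whose unique minimizer is $\hat{x}_k\triangleq{\rm prox}_{\alpha,h}(y)$; this is the exact counterpart of $\bar{x}_k$, and in fact $\hat{x}_k=z_k$ of Lemma~\ref{compact_lemma} because $y=\bar{v}_k$ there. The natural guess is $p_k\triangleq\hat{x}_k-\bar{x}_k$: since $y=\bar{x}_{k-1}-\alpha(\nabla g(\bar{x}_{k-1})+e_k)$, this choice already makes $\tfrac{1}{\alpha}\big(\bar{x}_{k-1}-\bar{x}_k-\alpha\nabla g(\bar{x}_{k-1})-\alpha e_k-p_k\big)=\tfrac{1}{\alpha}(y-\hat{x}_k)$, so the whole claim reduces to showing $\|p_k\|\le\sqrt{2\alpha\varepsilon_k}$ together with $\tfrac{1}{\alpha}(y-\hat{x}_k)\in\partial_{\varepsilon_k}h(\bar{x}_k)$. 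The only structural ingredients are the convexity of $h$ (Assumption~\ref{pro_assum}(b)) and the exact optimality condition $\tfrac{1}{\alpha}(y-\hat{x}_k)\in\partial h(\hat{x}_k)$ from Proposition~\ref{prox_proposition}(d).

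For the norm bound I would expand $\|\bar{x}_k-y\|^{2}=\|\hat{x}_k-y\|^{2}+2\langle\hat{x}_k-y,\bar{x}_k-\hat{x}_k\rangle+\|\bar{x}_k-\hat{x}_k\|^{2}$, substitute it into the $\varepsilon_k$-optimality inequality $\tfrac{1}{2\alpha}\|\bar{x}_k-y\|^{2}+h(\bar{x}_k)\le\varepsilon_k+\tfrac{1}{2\alpha}\|\hat{x}_k-y\|^{2}+h(\hat{x}_k)$, and cancel the common term $\tfrac{1}{2\alpha}\|\hat{x}_k-y\|^{2}$. After rearranging, the inequality reads
\[
\Big(h(\bar{x}_k)-h(\hat{x}_k)-\tfrac{1}{\alpha}\langle y-\hat{x}_k,\bar{x}_k-\hat{x}_k\rangle\Big)+\tfrac{1}{2\alpha}\|\bar{x}_k-\hat{x}_k\|^{2}\le\varepsilon_k .
\]
The bracketed quantity is nonnegative because $\tfrac{1}{\alpha}(y-\hat{x}_k)\in\partial h(\hat{x}_k)$, so I obtain both $\tfrac{1}{2\alpha}\|\bar{x}_k-\hat{x}_k\|^{2}\le\varepsilon_k$, i.e.\ $\|p_k\|\le\sqrt{2\alpha\varepsilon_k}$, and the ``slack'' estimate $h(\bar{x}_k)-h(\hat{x}_k)-\tfrac{1}{\alpha}\langle y-\hat{x}_k,\bar{x}_k-\hat{x}_k\rangle\le\varepsilon_k$, which is what is needed for the second half.

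For the $\varepsilon_k$-subdifferential membership, write $s\triangleq\tfrac{1}{\alpha}(y-\hat{x}_k)$. For arbitrary $z\in\mathbb{R}^n$ the exact subgradient inequality at $\hat{x}_k$ gives $h(z)\ge h(\hat{x}_k)+\langle s,z-\hat{x}_k\rangle$; decomposing $\langle s,z-\hat{x}_k\rangle=\langle s,z-\bar{x}_k\rangle+\langle s,\bar{x}_k-\hat{x}_k\rangle$ and inserting the slack estimate as $h(\hat{x}_k)+\langle s,\bar{x}_k-\hat{x}_k\rangle\ge h(\bar{x}_k)-\varepsilon_k$ yields $h(z)\ge h(\bar{x}_k)+\langle s,z-\bar{x}_k\rangle-\varepsilon_k$ for every $z$, which is precisely $s\in\partial_{\varepsilon_k}h(\bar{x}_k)$ in the sense defined in Section~\ref{math_sec}. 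Substituting $s=\tfrac{1}{\alpha}\big(\bar{x}_{k-1}-\bar{x}_k-\alpha\nabla g(\bar{x}_{k-1})-\alpha e_k-p_k\big)$, using the definitions of $y$ and $p_k$, completes the proof.

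I do not anticipate a genuine obstacle. The single point requiring care is the bookkeeping in the expansion of $\|\bar{x}_k-y\|^{2}$: one must check that the linear cross term generated there cancels exactly against the subgradient term coming from $\tfrac{1}{\alpha}(y-\hat{x}_k)\in\partial h(\hat{x}_k)$, so that the budget $\varepsilon_k$ splits cleanly into the ``distance-to-$\hat{x}_k$'' part $\tfrac{1}{2\alpha}\|p_k\|^{2}$ and the ``$\varepsilon_k$-subdifferential slack'' part. This is the argument of Lemma~2 in \cite{Schmidt2012}, transcribed into the present notation.
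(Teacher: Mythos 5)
Your proof is correct: the choice $p_k=\hat{x}_k-\bar{x}_k$ with $\hat{x}_k={\rm prox}_{\alpha,h}(y)$, the cancellation of the cross term against the exact optimality condition $\tfrac{1}{\alpha}(y-\hat{x}_k)\in\partial h(\hat{x}_k)$, and the resulting split of $\varepsilon_k$ into the strong-convexity bound $\tfrac{1}{2\alpha}\|p_k\|^2\le\varepsilon_k$ and the $\varepsilon_k$-subgradient slack all go through exactly as you describe. The paper gives no proof of its own here --- it defers entirely to Lemma~2 of \cite{Schmidt2012} --- and your argument is a faithful, self-contained reconstruction of that cited proof.
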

Now, motivated by the work \cite{GuWHH18}, we are ready to discuss the convergence performance of the proposed algorithm. For a convex and closed set $\mathcal{K}\subset \mathbb{R}^n$, define $m(\mathcal{K})=\min_{x\in \mathcal{K}} \|x\|$. 

\begin{theorem}\label{gra_theo}
		Suppose Assumptions \ref{pro_assum} and \ref{net_assum} hold. $ \lim_{k\to \infty} m\big(\nabla g(\bar{x}_k)+\partial_{\varepsilon_{k+1}}h(\bar{x}_k)\big)=0$ for distributed proximal gradient algorithm \eqref{distri_m}.
\end{theorem}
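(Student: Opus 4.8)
The plan is to upper bound $m\bigl(\nabla g(\bar x_k)+\partial_{\varepsilon_{k+1}}h(\bar x_k)\bigr)$ by a fixed multiple of $\|\bar x_{k+1}-\bar x_k\|+\|e_{k+1}\|+\sqrt{\varepsilon_{k+1}}$, and then argue that each of these three quantities tends to $0$. That $\|e_{k+1}\|\to0$ and $\varepsilon_{k+1}\to0$ (hence $\sqrt{\varepsilon_{k+1}}\to0$) is immediate from Proposition~\ref{e_sum_pro}, since the general term of a convergent series vanishes. The substantive point is to show that the averaged proximal–gradient displacement $\|\bar x_{k+1}-\bar x_k\|$ vanishes, for which I would establish a sufficient–decrease inequality for the averaged objective $f=g+h$ along $\{\bar x_k\}$.

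\emph{Extracting a small $\varepsilon$-subgradient.} By Lemma~\ref{compact_lemma}, $\bar x_{k+1}$ is an $\varepsilon_{k+1}$-optimal solution of the proximal subproblem with input $y=\bar x_k-\alpha(\nabla g(\bar x_k)+e_{k+1})$; feeding this into Lemma~\ref{pk_lemma} furnishes $p_{k+1}$ with $\|p_{k+1}\|\le\sqrt{2\alpha\varepsilon_{k+1}}$ and, at the relevant iterate,
\[
\tfrac1\alpha\bigl(\bar x_k-\bar x_{k+1}-\alpha\nabla g(\bar x_k)-\alpha e_{k+1}-p_{k+1}\bigr)\in\partial_{\varepsilon_{k+1}}h(\bar x_k).
\]
Adding $\nabla g(\bar x_k)$ gives the vector $\tfrac1\alpha(\bar x_k-\bar x_{k+1})-e_{k+1}-\tfrac1\alpha p_{k+1}\in\nabla g(\bar x_k)+\partial_{\varepsilon_{k+1}}h(\bar x_k)$; taking its norm and then the infimum over the set,
\[
m\bigl(\nabla g(\bar x_k)+\partial_{\varepsilon_{k+1}}h(\bar x_k)\bigr)\le\bigl(L+\tfrac1\alpha\bigr)\|\bar x_{k+1}-\bar x_k\|+\|e_{k+1}\|+\sqrt{2\varepsilon_{k+1}/\alpha},
\]
where the Lipschitz bound~\eqref{l_assump} is used to pass between the gradient evaluated at consecutive iterates.

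\emph{Sufficient decrease and vanishing displacement.} Inserting $x=\bar x_k$ as a competitor in the defining inequality~\eqref{inprox_def} of ${\rm prox}^{\varepsilon_{k+1}}_{\alpha,h}(y)$ and expanding the squared norms yields
\[
\tfrac1{2\alpha}\|\bar x_{k+1}-\bar x_k\|^2+\langle\nabla g(\bar x_k)+e_{k+1},\,\bar x_{k+1}-\bar x_k\rangle+h(\bar x_{k+1})\le h(\bar x_k)+\varepsilon_{k+1}.
\]
Adding the descent inequality~\eqref{lfsmooth} applied to $g=\tfrac1m\sum_i g_i$ (whose gradient is again $L$-Lipschitz) and using Young's inequality on $\langle e_{k+1},\bar x_{k+1}-\bar x_k\rangle$ gives, because $\alpha<1/L$,
\[
f(\bar x_{k+1})\le f(\bar x_k)-c\,\|\bar x_{k+1}-\bar x_k\|^2+C\,\|e_{k+1}\|^2+\varepsilon_{k+1},\qquad c=\tfrac12\Bigl(\tfrac1{2\alpha}-\tfrac L2\Bigr)>0,
\]
for a suitable $C>0$. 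Summing this telescoping bound over $k$, bounding $f(\bar x_k)\ge f(x^*)$ by Assumption~\ref{pro_assum}(e), and using the summability of $\{\varepsilon_k\}$ and of $\{\|e_k\|^2\}$ (the latter from Proposition~\ref{e_sum_pro} together with $\|e_k\|\to0$, or directly from~\eqref{eksumma} and Lemma~\ref{poly_lemma}), we obtain $\sum_{k\ge1}\|\bar x_{k+1}-\bar x_k\|^2<\infty$, hence $\|\bar x_{k+1}-\bar x_k\|\to0$.

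\emph{Conclusion and main difficulty.} Combining the last limit with the bound of the first step and with $\|e_{k+1}\|\to0$, $\sqrt{\varepsilon_{k+1}}\to0$ yields $m\bigl(\nabla g(\bar x_k)+\partial_{\varepsilon_{k+1}}h(\bar x_k)\bigr)\to0$. I expect the main obstacle to be the sufficient–decrease step: one must keep track of the interplay between the proximal inexactness (the slack $\varepsilon_{k+1}$ and the correction $p_{k+1}$), the gradient perturbation $e_{k+1}$, and the non‑convexity of $g$, and must split the Young's inequality so that every residual is either absorbed into $-c\|\bar x_{k+1}-\bar x_k\|^2$ — which is precisely where $\alpha<1/L$ is needed — or recognized as summable via Proposition~\ref{e_sum_pro}; everything after that is routine.
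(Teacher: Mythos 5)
Your proposal is correct and follows essentially the same route as the paper's proof: bound $m\bigl(\nabla g(\bar x_k)+\partial_{\varepsilon_{k+1}}h(\bar x_k)\bigr)$ via Lemma~\ref{pk_lemma} by $(\tfrac1\alpha+L)\|\bar x_{k+1}-\bar x_k\|+\|e_{k+1}\|+\sqrt{2\varepsilon_{k+1}/\alpha}$, then prove $\sum_k\|\bar x_{k+1}-\bar x_k\|^2<\infty$ from a sufficient-decrease inequality obtained by combining the inexact-prox inequality (with competitor $x=\bar x_k$) and the descent lemma~\eqref{lfsmooth}. The only difference is cosmetic: you absorb the cross term $\langle e_{k+1},\bar x_{k+1}-\bar x_k\rangle$ by Young's inequality into the negative quadratic (needing only $\sum_k\|e_k\|^2<\infty$), whereas the paper bounds $\sum_k\|e_{k+1}\|\,\|\bar x_{k+1}-\bar x_k\|$ directly via the polynomial-geometric estimate~\eqref{barxkgap}; your variant is, if anything, slightly cleaner.
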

\begin{proof}
	Let $ \mathbf{g}_k=\nabla g(\bar{x}_k)+ e_{k+1}$. It follows from \eqref{compact_m} and the definition of inexact proximal operator \eqref{inprox_def} that
	\begin{align*}
	&\frac{1}{2\alpha}\|\bar{x}_{k+1}-\bar{x}_k+\alpha  \mathbf{g}_k\|^2+h(\bar{x}_{k+1}) \leq \\
	& \varepsilon_{k+1} +\min_{x} \big\{ \frac{1}{2\alpha} \|x-\bar{x}_k+\alpha \mathbf{g}_k\|^2+h(x)\big\}.	
	\end{align*}
	Equivalently,
	\begin{align*}
	&\frac{1}{2\alpha} \|\bar{x}_{k+1}-\bar{x}_k\|^2+\langle\bar{x}_{k+1}-\bar{x}_k,\mathbf{g}_k \rangle+h(\bar{x}_{k+1})\leq\\
	& \varepsilon_{k+1}+\min_x \big\{ \frac{1}{2\alpha} \|x-\bar{x}_k\|^2+\langle x-\bar{x}_k,\mathbf{g}_k \rangle+h(x)\big\}.
	\end{align*}
	Take $x = \bar x_k$ in the right-hand side of the above equation. We obtain
    \begin{align}\label{prox_ineq}
	\left<\mathbf{g}_k,\bar{x}_{k+1}-\bar{x}_k\right>&+\frac{1}{2\alpha} \left\|\bar{x}_{k+1}-\bar{x}_k\right\|^2+h(\bar{x}_{k+1})
    \leq  h(\bar{x}_k)+\varepsilon_{k+1}.
    \end{align}
    By \eqref{lfsmooth} and \eqref{prox_ineq},
	\begin{align*}
	f(\bar{x}_{k+1})=&g(\bar{x}_{k+1})+h(\bar{x}_{k+1})\\
	\leq& g(\bar{x}_{k})+\left< \nabla g(\bar{x}_k), \bar{x}_{k+1}-\bar{x}_k\right>+\frac{L}{2}\left\|\bar{x}_{k+1}-\bar{x}_k\right\|^2\\
	&+h(\bar{x}_k)-\left<\nabla g(\bar{x}_k)+e_{k+1}, \bar{x}_{k+1}-\bar{x}_k\right>\\
    &-\frac{1}{2\alpha} \left\|\bar{x}_{k+1}-\bar{x}_k\right\|^2+\varepsilon_{k+1}\\
	=&f(\bar{x}_k)-(\frac{1}{2\alpha}-\frac{L}{2})\left\|\bar{x}_{k+1}-\bar{x}_k\right\|^2\\
	&+\varepsilon_{k+1}-\left<e_{k+1},\bar{x}_{k+1}-\bar{x}_k\right>,
	\end{align*}
	where the first inequality is due to the convexity of function $g$ and the proximal operator relationship (\ref{prox_ineq}).
	By summing the inequality over $k=1,\cdots,T$,
	\begin{align*}
	f(\bar{x}_{T+1})\leq& f(\bar{x}_1)-(\frac{1}{2\alpha}-\frac{L}{2})\sum_{k=1}^T \left\|\bar{x}_{k+1}-\bar{x}_k\right\|^2\\
	&+\sum_{k=1}^T \varepsilon_{k+1}+\sum_{k=1}^T\left\|e_{k+1}\right\|\left\|\bar{x}_{k+1}-\bar{x}_k\right\|.
	\end{align*}
    By rearranging the terms,
    \begin{align}\label{barxplusx}
    &\sum_{k=1}^T \left\|\bar{x}_{k+1}-\bar{x}_k\right\|^2\notag\\
    \leq &\frac{1}{\frac{1}{2\alpha}-\frac{L}{2}}\big(f(\bar{x}_1)-f(
    \bar{x}_{T+1})\big)+\frac{1}{\frac{1}{2\alpha}-\frac{L}{2}}\sum_{k=1}^T
    \varepsilon_{k+1}\notag\\
    &+\frac{1}{\frac{1}{2\alpha}-\frac{L}{2}}\sum_{k=1}^T\left\|e_{k+1}\right\|\left\|\bar{x}_{k+1}-\bar{x}_k\right\|,
    \end{align}
    where $\frac{1}{\frac{1}{2\alpha}-\frac{L}{2}}>0$ because the step-size satisfies $\alpha<\frac{1}{L}$.
    \par Then, consider the last term $\sum_{k=1}^T\left\|e_{k+1}\right\|\left\|\bar{x}_{k+1}-\bar{x}_k\right\|$.It follows from the inequality \eqref{xkgap},
    \begin{align}\label{barxkgap}
    &\|\bar{x}_{k+1}-\bar{x}_k\|\notag\\
    = &\frac{1}{m}\sum_{i=1}^m\|x_{i,k+1}-x_{i,k}\|\notag \\
    \leq &  2 {\Gamma} \big( \gamma^k \sum_{j=1}^m \|q_{j,k}\|\big)+ \alpha (G_g+G_h).
    \end{align}
     Then, by (\ref{eksumma}) and (\ref{barxkgap}),
    \begin{align}
    \|e_{k+1}\|&\|\bar{x}_{k+1}-\bar{x}_{k}\|\leq 4mL\Gamma^2 \gamma^{2k}\big(\sum_{i=1}^m\|q_{i,k}\|\big)^2\notag \\
    &+2\alpha mL\Gamma (G_g+G_h)\gamma^k \sum_{i=1}^m \|q_{i,k}\|,
    \end{align}
    where $\sum_{i=1}^m\|q_{i,k}\|\leq C_q+C_q^1 k+C_q^2 k^2$ and the right hand side is a polynomial-geometric sequence. By Lemma \ref{poly_lemma}, 
    \begin{align}\label{leqinf}
    \sum_{k=1}^T \|e_{k+1}\|\|\bar{x}_{k+1}-\bar{x}_{k}\|<\infty.
    \end{align}
	\par In addition, by Proposition \ref{e_sum_pro}, $\sum_{k=1}^T \varepsilon_{k+1}<\infty$. 
	It follows from (\ref{barxplusx}) and (\ref{leqinf}) that $\sum_{k=1}^T \left\|\bar{x}_{k+1}-\bar{x}_k\right\|^2< \infty.$ It follows from monotone convergence theorem and Cauchy condensation test\upcite{conver_cauchy} that
	\begin{align}\label{xsublimt}
	\lim_{k\to \infty} \left\|\bar{x}_{k+1}-\bar{x}_k\right\|=0.
	\end{align}
	By Lemma \ref{pk_lemma}, there exists $p_{k+1}$ such that $\left\|p_{k+1}\right\|\leq \sqrt{2\alpha\varepsilon_{k+1}}$ and
	\begin{align*}
	0\in &\frac{1}{\alpha}(\bar{x}_k-\bar{x}_{k+1}-p_{k+1})-\nabla g(\bar{x}_k)-e_{k+1}+\nabla g(\bar{x}_{k+1})\notag \\
    &-\nabla g(\bar{x}_{k+1})-\partial_{\varepsilon_{k+1}}h(\bar{x}_{k+1}).
	\end{align*}
	Then,
	\begin{align}\label{gr_eq}
	&\frac{1}{\alpha}(\bar{x}_k-\bar{x}_{k+1}-p_{k+1})-\nabla g(\bar{x}_k)-e_{k+1}+\nabla g(\bar{x}_{k+1})\notag \\
	&\in \nabla g(\bar{x}_{k+1})+\partial_{\varepsilon_{k+1}}h(\bar{x}_{k+1}).
	\end{align}
	The left hand side of \eqref{gr_eq} satisfies 
	\begin{align}\label{gr_normeq}
	&\left\|\frac{1}{\alpha}(\bar{x}_k-\bar{x}_{k+1}-p_{k+1})-\nabla g(\bar{x}_k)-e_{k+1}+\nabla g(\bar{x}_{k+1})\right\|\notag \\
    &\leq (\frac{1}{\alpha}+L)\left\|\bar{x}_k-\bar{x}_{k+1}\right\|+\sqrt{\frac{2\varepsilon_{k+1}}{\alpha}}+\|e_{k+1}\|,
	\end{align}
	where we utilize $\|p_k\|\leq \sqrt{2\alpha \varepsilon_{k}}$ in Lemma \ref{pk_lemma} and \eqref{l_assump} in Assumption \ref{pro_assum}.
	Then, by \eqref{xsublimt} and Proposition \ref{e_sum_pro}, 
	\begin{align}\label{norm2}
	&\lim_{k\to \infty}\big\|\frac{1}{\alpha}(\bar{x}_k-\bar{x}_{k+1}-p_{k+1}\!)-\nabla g(\bar{x}_k)-e_{k+1}+\nabla g(\bar{x}_{k+1})\big\|\notag\\
	&\leq \lim_{k\to \infty} ((\frac{1}{\alpha}+L)\left\|\bar{x}_k-\bar{x}_{k+1}\right\|+\sqrt{\frac{2\varepsilon_{k+1}}{\alpha}}+\|e_{k+1}\|)\notag\\
	&=0.
	\end{align}
	Therefore, it follows from (\ref{gr_eq}) and (\ref{norm2}) that
	$$\lim_{k\to \infty} m\big(\nabla g(\bar{x}_k)+\partial_{\varepsilon_{k+1}}h(\bar{x}_k)\big)=0.$$
\end{proof}
\begin{remark}
	Combining Theorems \ref{ave_theo} and \ref{gra_theo}, we obtain that for all agent $i$, the generated variable sequence $x_{i,k}$ converges to the set of critical points and there exists a subsequence of $x_{i,k}$ converging to one critical point of non-convex optimization problem \eqref{opti_pro}.
\end{remark}
Next, we consider the convergence rate of the proposed algorithm. By (\ref{gr_eq}) and (\ref{gr_normeq}),
\begin{align}\label{sum_norm}
&\frac{1}{T}\sum_{k=1}^T {\rm min}_{d_k\in \partial_{\varepsilon_k}h(\bar{x}_k)} \|\nabla g(\bar{x}_k)+d_k\| \notag \\
\leq&\frac{1}{T}\sum_{k=1}^T\big((\frac{1}{\alpha}+L)\|\bar{x}_k-\bar{x}_{k-1}\|+\sqrt{\frac{2\varepsilon_{k}}{\alpha}}+\|e_k\|\big).
\end{align}
Hence, we analyze $\frac{1}{T}\sum_{k=1}^T \|\bar{x}_k-\bar{x}_{k-1}\|^2$ to provide the convergence rate of \eqref{distri_m} in the non-convex setting. 
\par At first, we provide one related lemma, whose proof is provided in Lemma 1 of \cite{Schmidt2012}.
\begin{lemma}\label{uklemma}
	Assume that the non-negative sequence $u_k$ satisfies the following recursion for all $k\geq 1$:
	$$u_k^2\leq S_k+\sum_{i=1}^k \lambda_iu_i,$$
	with an increasing sequence $S_k$, $S_1\geq u_1^2$ and $\lambda_i\geq 0$. Then for all $k\geq 1$, 
	$$u_k\leq \frac{1}{2}\sum_{i=1}^k \lambda_i+\big(S_k+(\frac{1}{2}\sum_{i=1}^k \lambda_i)^2\big)^{\frac{1}{2}}.$$
\end{lemma}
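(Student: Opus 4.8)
The plan is to collapse the whole recursion into a single scalar quadratic inequality by passing to the running maximum of $\{u_i\}$. Fix an arbitrary index $k\ge 1$ and abbreviate $\Lambda_k:=\sum_{i=1}^k\lambda_i\ge 0$. First I would record that $S_k\ge 0$, which holds because $\{S_k\}$ is increasing and $S_1\ge u_1^2\ge 0$; this is exactly what makes the square root in the asserted bound well defined.

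Next, for every $i\in\{1,\dots,k\}$ I would chain the hypotheses as
$$u_i^2\le S_i+\sum_{j=1}^i\lambda_ju_j\le S_k+\sum_{j=1}^k\lambda_ju_j,$$
where the first step is the assumed recursion at index $i$ and the second uses monotonicity $S_i\le S_k$ together with $\lambda_ju_j\ge 0$, so that enlarging the partial sum to the full sum only increases the right-hand side. Writing $M_k:=\max_{1\le i\le k}u_i$ and bounding $u_j\le M_k$ inside the remaining sum, this reduces to the single inequality
$$M_k^2\le S_k+\Lambda_k M_k.$$

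Then I would treat this as the quadratic inequality $M_k^2-\Lambda_kM_k-S_k\le 0$ in the unknown $M_k\ge 0$. Its left-hand side is an upward-opening parabola with roots $\tfrac12\big(\Lambda_k\pm\sqrt{\Lambda_k^2+4S_k}\big)$; since $S_k\ge 0$, the smaller root is $\le 0$ and the larger is $\ge 0$, so the inequality forces $M_k$ to lie below the larger root, i.e.
$$M_k\le \tfrac12\Lambda_k+\tfrac12\sqrt{\Lambda_k^2+4S_k}=\tfrac12\Lambda_k+\Big(S_k+\big(\tfrac12\Lambda_k\big)^2\Big)^{1/2}.$$
Since $u_k\le M_k$ by definition of the maximum, this is precisely the claimed estimate with $\tfrac12\Lambda_k=\tfrac12\sum_{i=1}^k\lambda_i$.

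The argument is essentially routine; the two places that need a moment of care are (i) the passage from partial to full sums, which is exactly where the sign conditions $\lambda_i\ge 0$ and $u_i\ge 0$ are used, and (ii) selecting the correct (larger, non-negative) root of the quadratic and checking its discriminant is non-negative, which is where $S_k\ge 0$ enters. No limiting or monotone-convergence argument is needed here: the bound is obtained for each fixed $k$ separately, exactly as stated.
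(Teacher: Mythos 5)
Your proof is correct, and it is essentially the argument behind Lemma~1 of the cited reference \cite{Schmidt2012} (the paper itself only points to that source): pass to the running maximum $M_k=\max_{1\le i\le k}u_i$, use monotonicity of $S_k$ and nonnegativity of $\lambda_i u_i$ to obtain the scalar quadratic inequality $M_k^2\le S_k+\Lambda_k M_k$, and take the larger root. The sign checks you flag ($S_k\ge 0$ from $S_1\ge u_1^2\ge 0$, and the enlargement of the partial sums) are exactly the points where the hypotheses are consumed, so the argument is complete.
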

\par Then, we are ready to analyze the convergence rate of $\frac{1}{T}\sum_{k=1}^T \|\bar{x}_k-\bar{x}_{k-1}\|^2$.
\begin{theorem}\label{xgap_norm_theo}
	With Assumptions \ref{pro_assum} and \ref{net_assum}, the convergence rate of the sequence  $\frac{1}{T}\sum_{k=1}^T\left\|\bar{x}_k-\bar{x}_{k-1}\right\|^2$ is $O(\frac{1}{T})$.
\end{theorem}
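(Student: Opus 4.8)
The plan is to convert the telescoped descent inequality \eqref{barxplusx} into a bound on $\sum_{k=1}^T\|\bar{x}_{k+1}-\bar{x}_k\|^2$ that is \emph{uniform} in $T$; dividing by $T$ then yields the $O(1/T)$ rate at once. Write $\kappa\triangleq\big(\tfrac{1}{2\alpha}-\tfrac{L}{2}\big)^{-1}>0$, which is positive because $\alpha<1/L$. First I would dispose of the two easy terms on the right-hand side of \eqref{barxplusx}: by Assumption \ref{pro_assum}(e) we have $f(\bar{x}_{T+1})\geq f(x^{*})$, hence $f(\bar{x}_1)-f(\bar{x}_{T+1})\leq f(\bar{x}_1)-f(x^{*})$, and by Proposition \ref{e_sum_pro} the partial sums $\sum_{k=1}^T\varepsilon_{k+1}$ are dominated by $\sum_{k=1}^{\infty}\varepsilon_k<\infty$. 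Setting $\Delta\triangleq\kappa\big(f(\bar{x}_1)-f(x^{*})\big)+\kappa\sum_{k=1}^{\infty}\varepsilon_k<\infty$, inequality \eqref{barxplusx} becomes
$$\sum_{k=1}^T\|\bar{x}_{k+1}-\bar{x}_k\|^2\ \leq\ \Delta+\kappa\sum_{k=1}^T\|e_{k+1}\|\,\|\bar{x}_{k+1}-\bar{x}_k\|.$$

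It remains to absorb the cross term. The shortest route is to note that \eqref{leqinf} has already shown $\sum_{k=1}^{\infty}\|e_{k+1}\|\,\|\bar{x}_{k+1}-\bar{x}_k\|<\infty$, since the bound obtained there is a polynomial-geometric sequence (Lemma \ref{poly_lemma} together with Lemma \ref{qbound_lemma}); this makes the whole right-hand side above a finite constant independent of $T$. The more self-contained argument, and the reason Lemma \ref{uklemma} was recorded, is to define $u_T\triangleq\big(\sum_{k=1}^T\|\bar{x}_{k+1}-\bar{x}_k\|^2\big)^{1/2}$ and use the elementary inequality $\|\bar{x}_{k+1}-\bar{x}_k\|\leq u_k$ (each increment is one of the summands defining $u_k^2$). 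With $\lambda_k\triangleq\kappa\|e_{k+1}\|\geq0$ the displayed bound then reads $u_T^2\leq\Delta+\sum_{k=1}^T\lambda_k u_k$, which is exactly the recursion of Lemma \ref{uklemma} with the constant, hence non-decreasing, sequence $S_k\equiv\Delta$, after enlarging $\Delta$ if needed so that $S_1\geq u_1^2$ (finite by \eqref{barxplusx} at $T=1$). Lemma \ref{uklemma} then gives $u_T\leq\tfrac12\sum_{k=1}^T\lambda_k+\big(\Delta+(\tfrac12\sum_{k=1}^T\lambda_k)^2\big)^{1/2}$, and since $\sum_{k=1}^T\lambda_k\leq\kappa\sum_{k=1}^{\infty}\|e_k\|<\infty$ by Proposition \ref{e_sum_pro}, the right-hand side is bounded by a constant $M$ that does not depend on $T$. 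Either way, $\sum_{k=1}^T\|\bar{x}_{k+1}-\bar{x}_k\|^2\leq M^2$ for every $T$.

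Finally, reindexing (the increments $\bar{x}_k-\bar{x}_{k-1}$ are the same quantities shifted by one) and dividing by $T$ gives $\frac{1}{T}\sum_{k=1}^T\|\bar{x}_k-\bar{x}_{k-1}\|^2\leq M^2/T=O(1/T)$, as claimed. The only genuinely delicate point, and the main obstacle, is the self-referential cross term $\|e_{k+1}\|\,\|\bar{x}_{k+1}-\bar{x}_k\|$, in which the quantity one is trying to bound reappears on the right; this is severed either by the polynomial-geometric estimates of Section \ref{proof_sec} (which bound \eqref{leqinf} outright) or, abstractly, by the Gronwall-type Lemma \ref{uklemma}. Everything else is bookkeeping with the summability facts of Proposition \ref{e_sum_pro} and the lower bound on $f$ from Assumption \ref{pro_assum}(e).
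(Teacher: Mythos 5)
Your proof is correct, but it reaches the uniform bound on $\sum_{k=1}^T\|\bar{x}_{k+1}-\bar{x}_k\|^2$ by a genuinely different route. The paper does not reuse \eqref{barxplusx}; instead it re-derives a sharper descent inequality directly from the $\varepsilon_{k+1}$-subdifferential element supplied by Lemma \ref{pk_lemma}, obtaining the coefficient $\tfrac{1}{\alpha}-\tfrac{L}{2}$ (rather than your $\tfrac{1}{2\alpha}-\tfrac{L}{2}$) and a cross term $\big(\|e_{k+1}\|+\sqrt{2\varepsilon_{k+1}/\alpha}\big)\|\bar{x}_{k+1}-\bar{x}_k\|$. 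It then applies Lemma \ref{uklemma} with $u_k=\|\bar{x}_k-\bar{x}_{k-1}\|$ itself (setting $\bar{x}_0=\bar{x}_1$ so that $S_1\geq u_1^2$), which yields a $T$-independent bound on each individual increment; substituting that uniform bound back into the telescoped inequality \eqref{fsubfT} then bounds the whole sum by $\big(2A_T+\sqrt{\kappa(f(\bar{x}_0)-f(x^*))}+\sqrt{B_T}\big)^2$. Your route~2 instead applies Lemma \ref{uklemma} to $u_T=\big(\sum_{k\leq T}\|\bar{x}_{k+1}-\bar{x}_k\|^2\big)^{1/2}$ via the observation $\|\bar{x}_{k+1}-\bar{x}_k\|\leq u_k$, which bounds the partial sums in one shot and avoids the back-substitution step; your route~1 is even shorter, since \eqref{leqinf} already severs the self-reference in the cross term using the explicit polynomial-geometric bound \eqref{barxkgap} (indeed, the proof of Theorem \ref{gra_theo} already concludes $\sum_k\|\bar{x}_{k+1}-\bar{x}_k\|^2<\infty$ this way, so the paper's separate argument here is partly redundant). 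What the paper's version buys is the better constant and an explicit uniform bound on each step length $\|\bar{x}_T-\bar{x}_{T-1}\|$; what yours buys is economy, reusing estimates already in hand. The only points to tidy are minor: state explicitly that you adopt $\bar{x}_0=\bar{x}_1$ so the $k=1$ term of the target sum vanishes under your reindexing, and note that ``increasing'' in Lemma \ref{uklemma} must be read as non-decreasing for your constant choice $S_k\equiv\Delta$ (the paper's own application relies on the same reading).
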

\begin{proof}
   Recall that
	$\frac{1}{\alpha}\big(\bar{x}_{k}-\bar{x}_{k+1}-\alpha \nabla g(\bar{x}_k)-\alpha e_{k+1}-p_{k+1}\big)\in \partial_{\varepsilon_{k+1}}h(\bar{x}_{k+1})$ in \eqref{gr_eq}. By \eqref{lfsmooth} and the definition of $\varepsilon_{k}$-subdifferential,
	\begin{align*}
	&f(\bar{x}_{k+1})=g(\bar{x}_{k+1})+h(\bar{x}_{k+1})\\
	\leq & g(\bar{x}_k)+\left<\nabla g(\bar{x}_k), \bar{x}_{k+1}-\bar{x}_k\right>+\frac{L}{2}\left\|\bar{x}_{k+1}-\bar{x}_k\right\|^2+h(\bar{x}_k)\\
	&-\Big\langle \nabla g(\bar{x}_k)+e_{k+1}+\frac{1}{\alpha}(\bar{x}_{k+1}-\bar{x}_k+p_{k+1}),\bar{x}_{k+1}-\bar{x}_k\Big\rangle+\varepsilon_{k+1}\\
	=&f(\bar{x}_k)-\frac{1}{\alpha}\left\|\bar{x}_{k+1}-\bar{x}_k\right\|^2+\frac{L}{2}\left\|\bar{x}_{k+1}-\bar{x}_k\right\|^2\\
    &-\Big\langle e_{k+1}+\frac{1}{\alpha}p_{k+1},\bar{x}_{k+1}-\bar{x}_k\Big\rangle+\varepsilon_{k+1}\\
	\leq &f(\bar{x}_k)-(\frac{1}{\alpha}-\frac{L}{2})\left\|\bar{x}_{k+1}-\bar{x}_k\right\|^2\\
	&+\Big(\left\|e_{k+1}\right\|+\sqrt{\frac{2\varepsilon_{k+1}}{\alpha}}\Big)\|\bar{x}_{k+1}-\bar{x}_k\|+\varepsilon_{k+1}.
	\end{align*}
\par By summing the above inequality over $k=1,\dots,T-1$, 
	\begin{align}\label{fsubfT}
f(\bar{x}_T)\leq& f(\bar{x}_0)-(\frac{1}{\alpha}-\frac{L}{2})\sum_{k=1}^{T-1} \left\|\bar{x}_{k+1}-\bar{x}_k\right\|^2+\sum_{k=1}^{T-1} \varepsilon_{k+1}\notag \\
&+\sum_{k=1}^{T-1}\Big(\left\|e_{k+1}\right\|+\sqrt{\frac{2\varepsilon_{k+1}}{\alpha}}\Big)\|\bar{x}_{k+1}-\bar{x}_k\|.
\end{align}
	Then, by rearranging and scaling,
	\begin{align*}
	\|\bar{x}_T\!-\!\bar{x}_{T-1}\|^2 \!\leq& \underbrace{\frac{1}{\frac{1}{\alpha}-\frac{L}{2}}\Big(f(\bar{x}_0)-f({x}^*)+\sum_{k=1}^T \varepsilon_{k}\Big)}_{S_T}\notag\\
&+\!\sum_{k=1}^T \underbrace{\!\frac{1}{\frac{1}{\alpha}-\frac{L}{2}}\Big(\left\|e_{k}\right\|\!+\!\sqrt{\frac{2\varepsilon_{k}}{\alpha}}\Big)\!}_{\lambda_k}\underbrace{\Big\|\bar{x}_k-\bar{x}_{k-1}\Big\|}_{u_k},
	\end{align*}
	where $x^*$ is the optimal solution of optimization problem. 
	\par Let $\bar{x}_{0}=\bar{x}_1$, then $u_1^2=0$ and $S_1\geq u_1^2$. By Lemma \ref{uklemma}, 
	\begin{align*}
	&\left\|\bar{x}_T-\bar{x}_{T-1}\right\|\\
	\leq & \frac{1}{2}\sum_{k=1}^T\lambda_k+\big(S_T+(\frac{1}{2}\sum_{k=1}^T \lambda_k)^2\big)^{\frac{1}{2}}\\
	=&\underbrace{\frac{1}{2}\sum_{k=1}^T \frac{1}{\frac{1}{\alpha}-\frac{L}{2}} \Big(\sqrt{\frac{2\varepsilon_{k}}{\alpha}}+\left\|e_{k}\right\|\Big)}_{A_T}\\
&+\Big(\frac{1}{\frac{1}{\alpha}-\frac{L}{2}} \big(f(\bar{x}_0)-f({x}^*)\big)+\underbrace{\frac{1}{\frac{1}{\alpha}-\frac{L}{2}}\sum_{k=1}^T \varepsilon_{k}}_{B_T}\\
&+\big(\underbrace{\frac{1}{2}\sum_{k=1}^T \frac{1}{\frac{1}{\alpha}-\frac{L}{2}}(\sqrt{\frac{2\varepsilon_{k}}{\alpha}}+\|e_{k}\|)}_{A_T}\big)^2\Big)^{\frac{1}{2}}.\\
	\end{align*}
	Because $A_k$ and $B_k$ are increasing sequences, $\forall k\leq T$,
	\begin{align}\label{xksubx}
	&\left\|\bar{x}_k-\bar{x}_{k-1}\right\|\notag\\
	\leq &A_T+\Big(\frac{1}{\frac{1}{\alpha}-\frac{L}{2}}(f(\bar{x}_0)-f({x}^*))+B_T+A_T^2\Big)^{\frac{1}{2}}\notag\\
	\leq & 2A_T+\sqrt{\frac{1}{\frac{1}{\alpha}-\frac{L}{2}}\big(f(\bar{x}_0)-f({x}^*)\big)}+\sqrt{B_T}.
	\end{align}
	By (\ref{fsubfT}) and (\ref{xksubx}), 
	\begin{align*}
	&\sum_{k=1}^T \|\bar{x}_k-\bar{x}_{k-1}\|^2\\
	\leq &\frac{1}{\frac{1}{\alpha}-\frac{L}{2}}\big(f(\bar{x}_0)-f({x}^*)\big)+B_T\\
	&+2A_T\Big(2A_T+\sqrt{\frac{1}{\frac{1}{\alpha}-\frac{L}{2}}\big(f(\bar{x}_0)-f({x}^*)\big)}+\sqrt{B_T}\Big)\\
	\leq &\Big(2A_T+\sqrt{\frac{1}{\frac{1}{\alpha}-\frac{L}{2}}\big(f(\bar{x}_0)-f({x}^*)\big)}+\sqrt{B_T}\Big)^2.
	\end{align*}
	Since $A_T$ and $B_T$ are upper bounded by Proposition \ref{e_sum_pro}, we obtain $\big\{\frac{1}{T}\sum_{k=1}^T\left\|\bar{x}_k-\bar{x}_{k-1}\right\|^2\big\}=O(1/T)$.
\end{proof}
By (\ref{sum_norm}) and Theorem \ref{xgap_norm_theo}, the proposed distributed algorithm has a convergence rate $O(\frac{1}{T})$ for the distributed non-convex optimization problem (\ref{opti_pro}).
\begin{remark}
We establish the convergence rate of the proposed algorithm with respect to the number of communication steps. Let $t$ be the total number of communication steps taken. Since the proposed algorithm takes $k$ communication steps in iteration $k$, the total number of communication steps to execute iterations $1,\cdots, T$ is $\sum_{k=1}^T k=\frac{T(T+1)}{2}$. Then, when $t$ communication steps are taken, the number of iterations completed is $T$ such that $\frac{T(T+1)}{2}<t$, or equivalently, $T=\lceil \frac{-1+\sqrt{1+8t}}{2}\rceil$. Hence, $\frac{1}{T}\sum_{k=1}^T {\rm min}_{d_k\in \partial_{\varepsilon_k}h(\bar{x}_k)} \|\nabla g(\bar{x}_k)+d_k\|=O(1/\sqrt{t})$, where $t$ is the total number of communication steps taken.
\end{remark}
\section{Simulation}\label{simulation}
We apply algorithm \eqref{distri_m} to learn the black-box binary classification problem\cite{Huang2019}, which is to find the optimal predictor $x\in \mathbb{R}^n$ by solving the following problem:
\begin{align}\label{black_box}
{\rm min}_{x\in \mathbb{R}^n} \frac{1}{m} \sum_{i=1}^m \frac{1}{1+{\rm exp}(l_i a_i'x)} +\lambda_1 \|x\|_1+\lambda_2\|x\|_2^2,
\end{align}
where $a_i\in \mathbb{R}^n$, $l_i \in \{-1,1\}$ and $\{a_i,l_i\}_{i=1}^m$ denotes the set of training samples. 
 \begin{table}[!htbp]
	\caption{Real data for black-box binary classification}\label{real_data}
	\centering
	\begin{tabular}{c|c|c|c}
		\hline
		datasets & \#samples & \#features & \#classes  \\
		\hline
		$a9a$  & 32561& 123 &2\\
		\hline
		$covtype.binary$&581012&54&2\\
		\hline
	\end{tabular}
\end{table}
In this experiment, we use the publicly available real datasets\footnote{$a9a$ and $covtype.binary$ are from the website www.csie.ntu.edu.tw/~cjlin/libsvmtools/datasets/.}, which are summarized in Table \ref{real_data}. The proposed distributed algorithm \eqref{distri_m} is applied over ten-agent networks to solve the problem \eqref{black_box}. Meanwhile, $\lambda_1=\lambda_2=5\times 10^{-4}$. For the underlying network, we take time-invariant undirected connected graphs and  periodic time-varying graphs, respectively. 
\par (1) Time-invariant undirected connected graphs: Over the undirected connected multi-agent networks, we apply \eqref{distri_m} (denoted by `meth1') and the algorithm in \cite{decen-nonconvex} (denoted by `meth2') to solve \eqref{black_box}. Both algorithms take the same constant step-size.
\par (2) Time-varying undirected graphs: Over periodic time-varying graphs, which are generated randomly and satisfy Assumption \ref{net_assum}, we apply \eqref{distri_m} (denoted by `meth1-s') and the algorithm in \cite{NEXT2016} (denoted by `meth3-s') to solve \eqref{black_box}. The algorithm `meth3-s' takes a diminishing step-size in \cite{NEXT2016}, and the proposed algorithm `meth1-s' takes a constant step-size. 
\begin{figure}
	\centering
	\includegraphics[width=6cm]{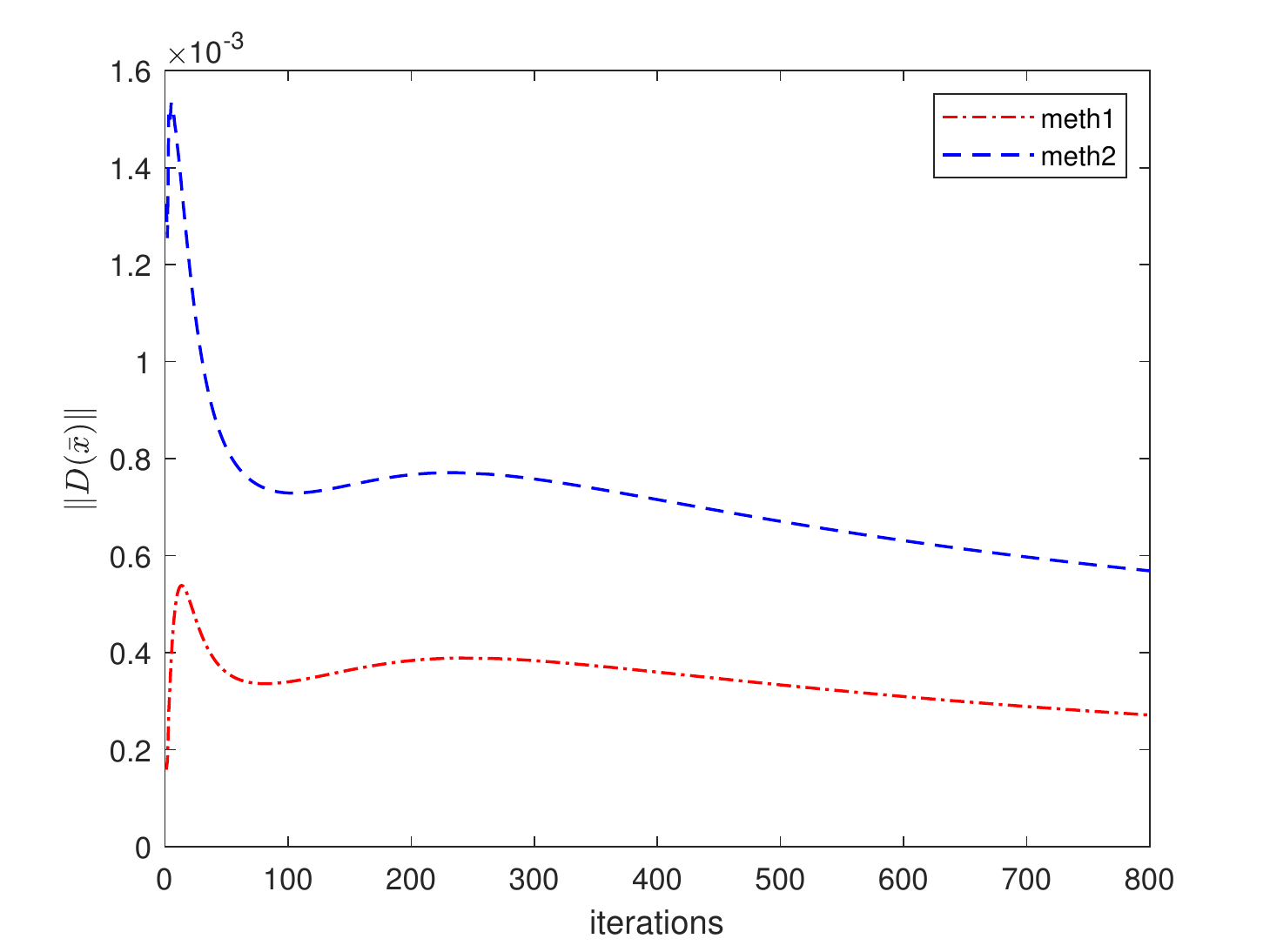}\\
	\caption{Consensus trajectories of local variables over time-invariant graphs}\label{a9a_fixed}
	\centering
\end{figure}
\begin{figure}
	\centering
	\subfigure[Convergence trajectories of different algorithms for real data $a9a$]{
		\includegraphics[width=8 cm, height =  3 cm]{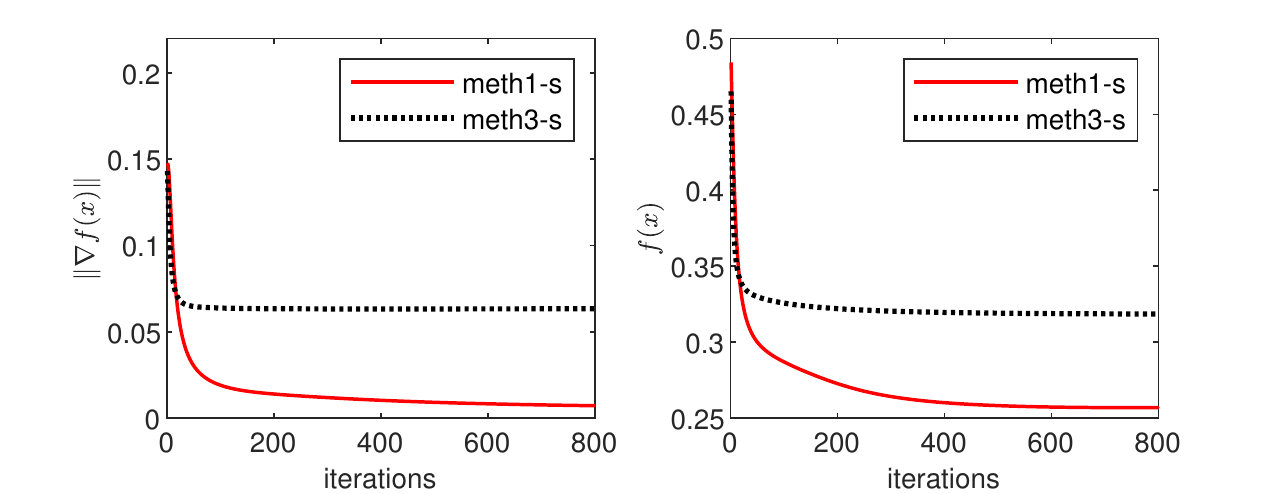}
		\label{a9a_fig}
	}
	\subfigure[Convergence trajectories of different algorithms for real data $covtype$]{
		\includegraphics[width=8 cm, height =  3cm]{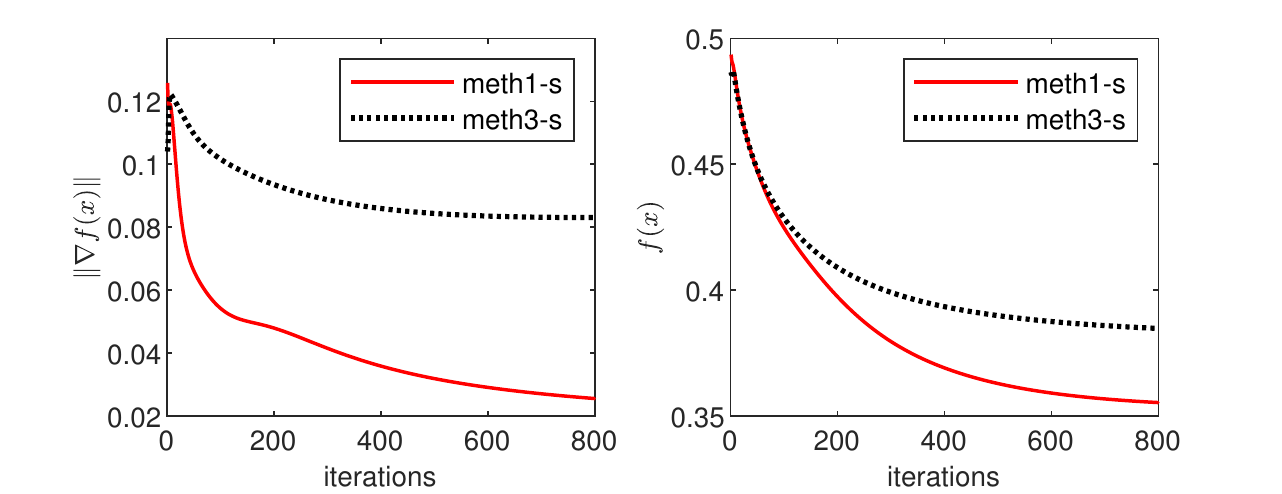}
		\label{covtype_fig}
	}
	\caption{Convergence results of different algorithms over time-varying graphs}
	\label{conver_fig}
\end{figure}

\par Define $D(\bar{x})=\sum_{i=1}^{10}x_i'\sum_{j=1}^{10} a_{ij}(x_i-x_j)$. For the dataset $a9a$ over time-invariant graphs, the trajectories of $D(\bar{x})$ are shown in Fig. \ref{a9a_fixed}, which implies that the variable eastimates of different agents achieve consensus. In addition, it is seen that the trajectory generated by \eqref{distri_m} owns a better consistent performance than those generated by \cite{decen-nonconvex}. For the datasetss $a9a$ and $covtype.binary$ over time-varying graphs, the convergence trajectories are shown in Figs. \ref{a9a_fig} and \ref{covtype_fig}, respectively. The trajectories of the norm of gradient converging to zero imply that variable estimates converge to critical points of opitmization problem \eqref{black_box}. It is seen that over time-varying undirected graphs, 'meth1-s' owns a better convergence performance than 'meth3-s' proposed in \cite{NEXT2016}. Hence, the proposed distributed algorithm \eqref{distri_m} is able to solve the binary classificaition problem \eqref{black_box} efficiently.
\section{Conclusion}\label{conclusion}
This paper has proposed a distributed proximal gradient algorithm for the large-scale non-smooth non-convex optimization problem over time-varying multi-agent networks. The proposed algorithm has made use of multi-step consensus stage to improve the consistent performance of the proposed distributed algorithm. In addition, this paper has provided complete and rigorous theoretical convergence analysis. In binary classification tests,  the proposed algorithm has a better convergence performance compared with existing distributed algorithms. One future research direction is to develop distributed stochastic algorithms for distributed non-convex optimization over multi-agent networks.

\bibliographystyle{ieeetran}
\bibliography{refer}

\end{document}